\documentclass{amsart}

\usepackage{datetime}
\usepackage{amsfonts}
\usepackage{amsmath}
\usepackage{amssymb}
\usepackage{amsthm}
\usepackage{enumerate}
\usepackage{eucal}
\usepackage{graphicx,color}
\usepackage[usenames,dvipsnames,svgnames,table]{xcolor}

\usepackage{hyperref}
\hypersetup{colorlinks,
            filecolor=black,
            linkcolor=MidnightBlue,
            citecolor=NavyBlue,
            urlcolor=RoyalBlue,
            bookmarksopen=true}

\theoremstyle{plain}
\newtheorem{lemma}{Lemma}
\newtheorem{corollary}[lemma]{Corollary}

\newtheorem*{main}{Main Conjecture}

\newtheorem{remark}{Remark}

\theoremstyle{remark}

\newtheorem*{assumption}{Closeness Assumptions}

\newtheorem{case}{Case}

\theoremstyle{plain}
\newtheorem{conjecture}{Conjecture}

\newcommand{\ds}{\mr{d}s}

\newcommand{\mb}{\mathbb}
\newcommand{\mc}{\mathcal}

\newcommand{\mr}{\mathrm}
\newcommand{\po}{\big|_{\mc S^2_\pm}}

\newcommand{\ten}{\otimes}

\newcommand{\ve}{\varepsilon}
\newcommand{\vp}{\varphi}

\DeclareMathOperator{\Rc}{Rc}

\newcounter{mnotecount}[section]
\let\oldmarginpar\marginpar
\setlength{\marginparwidth}{0.8in}
\renewcommand\marginpar[1]{\-\oldmarginpar[\raggedleft\footnotesize #1]
{\raggedright\footnotesize #1}}

\begin{document}

\title[Asymptotic approach to K\"ahler singularity models]
{Non-K\"ahler Ricci flow singularities modeled on K\"ahler--Ricci  solitons\\~\\
\tiny{A chapter contributed to a \emph{Pure and Applied Mathematics Quarterly} volume
honoring Robert Bartnik's contributions to mathematical physics and mathematics}}

\author{James Isenberg}
\address[James Isenberg]{University of Oregon}
\email{isenberg@uoregon.edu}
\urladdr{http://www.uoregon.edu/$\sim$isenberg/}

\author{Dan Knopf}
\address[Dan Knopf]{University of Texas at Austin}
\email{danknopf@math.utexas.edu}
\urladdr{http://www.ma.utexas.edu/users/danknopf}

\author{Nata\v sa \v Se\v sum}
\address[Nata\v sa \v Se\v sum]{Rutgers University}
\email{natasas@math.rutgers.edu}
\urladdr{http://www.math.rutgers.edu/$\sim$natasas/}

\thanks{
JI thanks the NSF for support in PHY-1306441.
DK thanks the NSF for support in DMS-1205270.
N\v S thanks the NSF for support in DMS-0905749 and DMS-1056387.}

\begin{abstract}
We investigate Riemannian (non-K\"ahler) Ricci flow solutions that develop finite-time Type-I singularities and present
evidence in favor of a conjecture that parabolic rescalings at the singularities converge to singularity
models that are shrinking K\"ahler--Ricci solitons. Specifically, the singularity model for these solutions is expected to be the ``blowdown soliton" discovered in~\cite{FIK03}. Our partial results support the conjecture that the blowdown soliton is stable under Ricci flow, as well as the conjectured stability of the subspace of K\"ahler metrics under Ricci flow.
\end{abstract}

\maketitle

\section{Introduction}		\label{Intro}

While the behavior of Ricci flow is fairly well-understood  for three-dimensional Riemannian geometries, significantly less is known about four-dimensional Ricci flow. In this work, we study Ricci flow for a certain family of four-dimensional geometries (defined in Section 1.3) that develop finite-time Type-I singularities. Our interest in these geometries is to illuminate two outstanding issues concerning four-dimensional Ricci flows: i) the stability of certain singularity models in such flows, and ii) the behavior of Ricci flows that start at non-K\"ahler Riemannian geometries which are nonetheless close to K\"ahler geometries. To motivate our work here, we discuss each of these issues in turn. 

\subsection{Behavior of ``generic Ricci flow''}

One of the keys to understanding the nature of singularities that develop in solutions of $n$-dimensional Ricci flow is to adequately
classify the set of singularity models that may arise. Singularity formation in $3$-dimensional Ricci flow has been fairly well-understood since the work of Hamilton \cite{Ha1} and of Perelman \cite{Pe}. Indeed, it follows from the pinching estimate derived by
Ivey~\cite{Ivey93} and improved by Hamilton~\cite{Ha1} that the only possible $3$-dimensional
singularity models have nonnegative sectional curvature, which is a highly restrictive condition. By contrast,
M\'aximo's results~\cite{Maximo14} imply that, starting in dimension $n=4$, models of finite-time singularity
formation can have Ricci curvature of mixed sign (even for K\"ahler solutions).
As is well known, singularity models in every dimension have nonnegative scalar curvature.
However, as the only proven restriction on singularity models in dimensions $n\geq4$, this condition is too weak
to be very useful.

In dimensions $n\geq4$, therefore, a classification of all singularity models is impractical. A more promising alternative
is to try to classify those models that are generic, or at least stable. A singularity model developing from certain original data
is labeled \emph{stable} if flows starting from all sufficiently small perturbations of that data develop singularities with the same
singularity model; it is labeled \emph{generic} if flows that start from an open dense subset of all possible initial data develop singularities
having the same singularity model. Clearly, a singularity model can be generic only if it is stable.



Important work of Colding and Minicozzi (see~\cite{CM12} and \cite{CM15}) provides
strong support in favor of the conjecture that the only generic singularities of Mean Curvature Flow are
generalized cylinders $\mb R^m\times\mc S^{n-m}$. Although no analogous result is currently known
for Ricci flow, a conjectural picture comes from the work of  
Cao, Hamilton, and Ilmanen~\cite{CHI04}, who define the \emph{central density} $\Theta$ and the \emph{entropy}  $\nu(\mc M)$
of a shrinking Ricci soliton $\mc M$,  using Perelman's reduced volume and entropy, respectively (see~\cite{Pe}).
They observe that their central density imposes a partial order on shrinking solitons: monotonicity of the $\nu$-functional
in time means that if perturbations of a shrinking soliton develop singularities, these cannot be modeled on solitons
of lower density. (Compare~\cite{CM12}.) 

Motivated partly by \cite{CHI04}, it is conjectured by experts (see, e.g., \cite{HHS14}) that the only generic
singularity models in real dimension $n=4$ are $\mc S^4$, $\mc S^3\times\mb R$, $\mc S^2\times\mb R^2$
(all with their canonical metrics), and $(\mc L^2_{-1},h)$.\footnote{One reason this expectation is conjectural is that
it is not known if there exist Type-II singularity models on which Perelman's $\nu$-functional is undefined. However, 
such models, if they exist, are not expected to be generic.}
The manifold $(\mc L^2_{-1},h)$, which is constructed  and studied
in~\cite{FIK03}, is a $\mr U(2)$-invariant gradient K\"ahler shrinking soliton
on the complex line bundle\footnote{The bundle we label $\mc L^2_{-1}$ here is denoted by  $\mc L^{-1}$ in~\cite{FIK03}
and by $L(2,-1)$ in \cite{CHI04}.} $\mb C\hookrightarrow\mc L_{-1}^2\twoheadrightarrow\mb{CP}^1$, which is the
complex bundle $\mc O(-1)$;  \emph{i.e.,} it is the blow-up of $\mb C^2$ at the origin.
The next manifold on the list in~\cite{CHI04}, ordered by the central density $\Theta$, is $\mb{CP}^2$ with its
Fubini--Study metric. 

As noted above,  a pre-condition for a singularity model being generic is that it must be a stable attractor 
for Ricci flow --- regarded as a dynamical system on the space of Riemannian metrics. Stability of $\mc S^4$ is well established.
(In fact, Brendle and Schoen~\cite{BS} show that its basin of attraction includes all $1/4$-pinched metrics.)
Stability of generalized cylinders is strongly conjectured but not known for Ricci flow.
Stability of the blowdown soliton is also not known, although M\'aximo's proof~\cite{Maximo14} shows that arbitrarily small
$\mr U(2)$-invariant K\"ahler perturbations of the unstable shrinking soliton on $\mb{CP}^2\#\,\overline{\mb{CP}}^{\,2}$
(which was discovered independently by Koiso~\cite{Koiso90} and by Cao~\cite{Cao96})
develop singularities modeled on $(\mc L^2_{-1},h)$. (We remark that prior to M\'aximo's results, it was shown in ~\cite{HM11} that 
the Koiso-Cao soliton is \emph{linearly} unstable. We note also that M\'aximo's results were extended to general dimensions by 
Guo and Song~\cite{GS17}, who thus establish in full generality the conjecture made in part~(3) of Example~2.2
in~\cite{FIK03}.) $\mb{CP}^2$ is well known to be weakly variationally stable, and was expected by many to be stable.
However, Kr\"oncke~\cite{Kro13} has proven that it is dynamically unstable. (This has recently been independently
verified by two of the authors~\cite{KS17}.) That leaves $(\mc L^2_{-1},h)$ as a critical ``borderline'' case.   Our results
in this chapter provide some evidence in favor of the conjectured dynamic stability of $(\mc L^2_{-1},h)$. If true,
this would indicate an incomplete analogy between Ricci flow and mean curvature flow, where only generalized
cylinders are stable~\cite{CM12}.
\smallskip

While the construction of the $(\mc L^2_{-1},h)$ shrinker involves the blowup of a point on $\mb C^2$, following  the authors of ~\cite{CHI04}), we call $(\mc L^2_{-1},h)$ the \emph{blowdown soliton}.  We do this because, as shown in Theorem~1.6 of~\cite{FIK03},
there is a family of Riemannian manifolds $\mc N_t$, $-\infty<t<\infty$, with the following features: for  $t<0$, $\mc N_t$
is $(\mc L^2_{-1},h(t))$; for $t=0$, $\mc N_0$ is a K\"ahler cone on $\mb C^2$ with an isolated singularity at the origin;
and for $t>0$, $\mc N_t$ is an expanding soliton discovered by Cao~\cite{Cao97}.
It is expected that according to most (if not all) of the definitions of a \emph{weak solution of Ricci flow} which are currently
being explored (e.g., see \cite{HN15} and \cite{Stu16}), the family $\mc N_t$ will qualify for such a designation. 
Consequently, in this weak sense, one sees that Ricci flow can carry out
a blowdown, understood in the sense of algebraic geometry.
\smallskip

Our results in this chapter provide significant, albeit incomplete, evidence that the blowdown soliton is a singularity model attractor for solutions of Ricci flow that originate from a set of compact Riemannian initial data defined by a structural (isometry) hypothesis and by a (weak) set of  pinching 
conditions that we specify in Section~\ref{sec:evolve} below. Metrics in this set are not K\"ahler. As noted above,
these partial results provide some evidence in favor of the conjectured stability of $(\mc L^2_{-1},h)$. What prevents
this chapter from providing a complete proof is its reliance on two technical conjectures discussed below, for which we
present formal arguments but thus far lack rigorous arguments.

\subsection{Behavior of Ricci flow near K\"ahler geometries}

As noted above, the $(\mc L^2_{-1},h)$ shrinker is K\"ahler. Hence, the study of non-K\"ahler Ricci flows near the blowdown 
soliton provides information about the difficult issue of  the behavior of Ricci flow solutions that start near, but not in, the 
subspace of K\"ahler metrics. Do those solutions
stay near or (better) asymptotically approach that subspace, which is of infinite codimension? It is believed by many experts that the subspace of K\"ahler metrics \emph{should} be dynamically stable for nearby solutions of Ricci flow. Evidence of favor of this conjecture is provided by the work of Streets and Tian~\cite{ST11}, who prove that the K\"ahler subspace is an attractor for Hermitian curvature flow.

While our results fall far short of  a general stability principle for K\"ahler geometries, the partial results and a formal
argument presented later in this work do
support a conjectural picture of  non-K\"ahler solutions of Ricci flow that become asymptotically K\"ahler, in suitable
space-time neighborhoods of developing singularities, at rates that break scaling invariance. We hope that the evidence
we give here provides motivation for  further study of this general question, particularly in (real) dimension $n=4$.

\subsection{Organization}

The general class of Riemannian geometries  that we study in this work are smooth cohomogeneity-one metrics on the closed manifold $\mc S^2\tilde\times\mc S^2$ (the ``twisted bundle" of $\mc S^2$ over $\mc S^2$). We describe these geometries (which we label ``[$\mc S^2\tilde\times\mc S^2$]-warped Berger geometries") in detail below in Section \ref{sec:evolve}. Here, for the purposes of stating our main conjecture,
we note that for these metrics, there are two distinguished fibers $\mc S^2_\pm$ (at either ``pole''); by contrast, 
a generic fiber is diffeomorphic to $\mc S^3$.


In Section~\ref{sec-assume}, we identify an open subset of the [$\mc S^2\tilde\times\mc S^2$]-warped Berger geometries by means of five pinching inequalities.  These inequalities constitute our Closeness Assumptions, which we require the initial data for our Ricci flow solutions to satisfy. These assumptions ensure that our initial data, while not K\"ahler, are ``not too far'' from the subspace of K\"ahler metrics. In Section~\ref{sec-initial}, we prove that our assumptions are not vacuous;
\emph{i.e.,} we show that the open subset of initial data satisfying the Closeness Assumptions is not empty.

We clarify the relationship between K\"ahler geometries and the [$\mc S^2\tilde\times\mc S^2$]-warped Berger geometries in
Section~\ref{SingularityModel}. Also in that section, we provide some background information about the blowdown soliton.
\smallskip
 
In the remainder of this work, we prove a sequence of Lemmata and Corollaries that  combine to yield an
\emph{almost complete} proof --- modulo two technical conjectures discussed below --- of  the following result:

\begin{main}
There exists a nonempty open set of non-K\"ahler metrics on $\mc S^2\tilde\times\mc S^2$ 
(contained in the [$\mc S^2\tilde\times\mc S^2$]-warped Berger class, and satisfying the Closeness Assumptions)
such that any Ricci flow solution originating from this set has the following properties:
\begin{enumerate}
\item Inequalities (a)--(d) in the Closeness Assumptions are preserved by the flow.
\item The solution develops a Type-I singularity at $T<\infty$, with
 $|\mc S^2_-(T)|=0$.\footnote{We denote the area of either exceptional fiber at any time $t\in[0,T]$ by
$|\mc S^2_\pm(t)|$.}

\item Every blow-up sequence $\big(\mc S^2\tilde\times\mc S^2,G_k(t),p\big)$ with $p\in\mc S^2_-$
subconverges to a K\"ahler singularity model that is the blowdown shrinking soliton $(\mc L^2_{-1},h)$.
\end{enumerate}
\end{main}

\textbf{Acknowledgment} The authors thank Alexander Appleton for discovering a mistake in the original
version of this work.

\section{The set-up}		\label{sec:evolve}

\subsection{Topology and geometry}
In~\cite{IKS14}, we study  ``warped Berger'' metrics which take the form
\begin{equation}	\label{metric-standard}
 G = \ds\ten\ds+\Big\{f^2\,\omega^1\ten\omega^1
 	   + g^2\big(\omega^2\ten\omega^2
	   + \omega^3\ten\omega^3\big)\Big\}
\end{equation}
on $[s_-,s_+]\times\mr{SU}(2)$, where $\{ \omega^1, \omega^2, \omega^3\}$ constitutes a one-form basis for $\mr{SU}(2)$, where  $s(x,t)$ denotes arclength from $x=0$, with $x\in[-1,1]$, and where we set  
$s_\pm:=s(\pm1)$. The functions $f$ and $g$ depend only on $x$ (or equivalently on $s$); hence these metrics are cohomogeneity one. In~\cite{IKS14}, we choose boundary conditions on $f$ and $g$ that result in these metrics inducing geometries on $\mc S^3\times\mc S^1$. Here, we instead choose boundary conditions on $f$ and $g$ that result in smooth cohomogeneity geometries  on $\mc S^2\tilde\times\mc S^2$, thereby defining the class of [$\mc S^2\tilde\times\mc S^2$]-warped Berger geometries. We do this as follows.

It is a standard result in Riemannian geometry that one may smoothly
close the boundary at $s_-$, provided that the functions
$f_-(s):=f(s_-+s)$ and $g_-(s):=g(s_-+s)$ defined for $0\leq s\leq s_+-s_-$ satisfy
\begin{equation}	\label{close-}
f_-^{(\mr{even})}(0)=0,\quad
f_-'(0)=1,\qquad\mbox{and}\qquad
g_-(0)>0,\quad g_-^{(\mr{odd})}(0)=0.
\end{equation}
The topology then locally becomes that of the disc bundle
$\mc D^2\hookrightarrow\mc D_1^4\twoheadrightarrow\mc S^2$
with Euler class $1$ and boundary $\partial\mc D_1^4\approx\mc S^3$
that appears in the handlebody construction of $\mb{CP}^2$. Note that the
$2$-sphere here is the base of the Hopf fibration on $\mc S^3\approx\mr{SU}(2)$.
If one repeats this construction at $s_+$, with
$f_+(s):=f(s_++s)$ and $g_+(s):=g(s_++s)$ defined for $s_--s_+\leq s\leq 0$ satisfying
\begin{equation}	\label{close+}
f_+^{(\mathrm{even})}(0)=0,\quad
f_+'(0)=-1,\qquad\mbox{and}\qquad
g_+(0)>0,\quad g_+^{(\mr{odd})}(0)=0,
\end{equation}
one obtains a closed $4$-manifold with the topology of
$\mc S^2\tilde\times\mc S^2$.
We denote by $\mc S^2_\pm$ the distinguished $2$-spheres that
appear as the fibers in the closing construction at either ``pole'' $s_\pm$.
We note that while $\mc S^2\tilde\times\mc S^2$ is diffeomorphic to $\mb{CP}^2\#\,\overline{\mb{CP}}^{\,2}$,
the Ricci flow evolutions we study are not K\"ahler.

The metrics $G = \ds\ten\ds+f^2\,\omega^1\ten\omega^1+g^2\omega^2\ten\omega^2%
+h^2\omega^3\ten\omega^3$ described in Appendix~A of \cite{IKS14} are clearly
$\mr{SU}(2)$-invariant. The simplifying assumption $h\equiv g$ made here enlarges their
symmetry group to $\mr U(2)$. However, although $\mb{CP}^2\#\,\overline{\mb{CP}}^{\,2}$ admits
K\"ahler metrics, including the $\mr U(2)$-invariant K\"ahler--Ricci soliton mentioned above,
we observe in Lemma~\ref{CalabiCondition} that metrics of the form~\eqref{metric-standard} cannot be
K\"ahler unless they satisfy the closed condition $f=gg_s$.

\subsection{Ricci flow equations}
In this section, we investigate solutions $\big(\mc S^2\tilde\times\mc S^2,G(t)\big)$ of Ricci flow that originate  from smooth initial data $G(0)$ satisfying the closing conditions~\eqref{close-} and~\eqref{close+} for [$\mc S^2\tilde\times\mc S^2$]-warped Berger geometries, as outlined above. For as long as such solutions remain smooth, the functions  $f$ and $g$ continue to satisfy conditions
\eqref{close-} and \eqref{close+}, and hence remain  [$\mc S^2\tilde\times\mc S^2$]-warped Berger geometries.

Since the metrics studied in  \cite{IKS14} and those studied here are the same apart from boundary conditions,
we may use formulas~(10)--(13) of  \cite{IKS14} to obtain the sectional curvatures\footnote{Using
L'H\^opital's rule, it is straightforward to verify that all quantities appearing in this section are well
defined at $\mc S^2_\pm$. We make this explicit below.}   of the metric $G$:
\begin{subequations}		\label{curvatures}	
\begin{align}
\kappa_{12}=\kappa_{31}&=\frac{f^2}{g^4}-\frac{f_s g_s}{fg},\\
\kappa_{23}&=\frac{4g^2-3f^2}{g^4}-\frac{g_s^2}{g^2},\\
\kappa_{01}&=-\frac{f_{ss}}{f},\\
\kappa_{02}=\kappa_{03}&=-\frac{g_{ss}}{g}.
\end{align}
\end{subequations}

Writing the metric in coordinate form~\eqref{metric-standard},
we note that its evolution under Ricci flow is governed by the evolution
equations for $f$ and $g$, which (as shown in~(14) of \cite{IKS14})
take the following form:
\begin{subequations}		\label{RF}
\begin{align}
	\label{f-evolution}
f_t&=f_{ss}+2\frac{g_s}{g}f_s-2\frac{f^3}{g^4},\\
	\label{g-evolution}
g_t&=g_{ss}+\left(\frac{f_s}{f}+\frac{g_s}{g}\right)g_s
+2\frac{f^2-2g^2}{g^3}.
\end{align}
\end{subequations}
 The variable $s=s(x,t)$, representing arclength from the $\mc S^3$ at $x=0$, is a choice of
gauge that results in this system being manifestly strictly parabolic. The cost one pays for this is the non-vanishing commutator,
\begin{equation}	\label{Commutator}
    \left[\frac{\partial}{\partial t},\frac{\partial}{\partial s}\right]
    =-\left(\frac{f_{ss}}{f}+2\frac{g_{ss}}{g}\right)\frac{\partial}{\partial s}.
\end{equation}

\subsection{Closeness Assumptions}		\label{sec-assume}
The Riemannian Ricci flow solutions we study originate from an open set of cohomogeneity-one metrics
that is  defined by certain mild hypotheses,
which effectively guarantee that at least initially, the metrics are  ``somewhat close'' to the subspace of K\"ahler metrics.
\begin{assumption} \label{assumption-main}
At time $t=0$, the metric $G$ of the form~\eqref{metric-standard} determined by the pair $(f,g)$ satisfies the following:
\begin{enumerate}
\item[(a)] $f\leq g$;
\item[(b)] $gg_s\leq f$;
\item[(c)] $|f_s| \le 2/\sqrt3$;
\item[(d)] $g^2(s_+)-3g^2(s_-)\geq\delta^2$ for some $\delta>0$;
\item[(e)] $g_s\geq0$, with strict inequality off $\mc S^2_\pm$.
\end{enumerate}
\end{assumption}
 
It follows from  Lemma~26 of \cite{IKS14} that condition (a) is preserved under the flow. We prove in Section~\ref{FirstOrder} that
condition~(b) --- which, as we show there, may be regarded as a ``K\"ahler pinching condition'' --- and condition~(c)
are preserved by the flow. We prove in Section~\ref{Singular} that (d) is preserved. We explain the motivation for condition~(e), which we do not prove is preserved, in the discussion of Conjecture~\ref{SouthPole}.

\begin{remark}
Even for K\"ahler--Ricci flow solutions, condition (d)~is necessary
for the $g^2\big(\omega^2\ten\omega^2 + \omega^3\ten\omega^3\big)$ factor to vanish before the $\big(\ds\ten\ds+f^2\,\omega^1\ten\omega^1\big)$ factor does. This is necessary for the development of a local singularity on $\mc S^2_-$
(see Theorem~1.1 of \cite{SW11} and Remark~\ref{kappa-dichotomy} below).
\end{remark}

\subsection{Construction of metrics satisfying the Closeness Assumptions}		\label{sec-initial}
We choose  $f$ to be any smooth function that is defined for $s\in[s_-,s_+]$, is strictly positive except at $s_\pm$, satisfies $|f_s|\leq1$ with equality only at $s_\pm$, and satisfies the closing conditions~\eqref{close-} and~\eqref{close+}.
For each such function, we now construct an infinite-dimensional family $\mc G_{\alpha,\delta,\ve}$ of initial metrics which 
satisfy our Closeness Assumptions. The family depends on parameters $\alpha,\delta$, and $\ve$, to be chosen below. We define
\[
A^2:=2\int_{s_-}^{s_+} f(s)\,\mr d s,
\]
noting that we are free to let the difference $s_+-s_-$, and hence $A^2$, be as large as we wish.
We then choose $\alpha$ and $\delta$ to be any positive parameters satisfying
\begin{equation}	\label{2d}
\alpha^2+\delta^2\leq\frac{A^2}{2}.
\end{equation}


To define $g$, and hence a metric $(f,g)\in\mc G_{\alpha,\delta,\ve}$, 
we choose $\vp$ to be  any smooth function satisfying $1-\ve\leq\vp\leq1$, requiring that it be nonconstant
unless $\ve=0$. Clearly $\ve$ controls how much  $\vp$ can stray from  being constant.
We then set 
\[
g^2(s):=\alpha^2+2\int_{s_-}^s \vp(\bar s)f(\bar s)\,\mr d\bar s.
\]
We readily verify  that $g$ defined in this way satisfies closing conditions~\eqref{close-} and~\eqref{close+}.

To verify that part~(a) of our Closeness Assumptions is satisfied, we observe that the gradient
restriction $|f_s|\leq1$ implies that
\begin{align*}
f^2(s)&=2 \int_{s_-}^s f(\bar s)f_{\bar s}(\bar s)\,\mr d\bar s\\
	&\leq2 \int_{s_-}^s \big\{\vp+(1-\vp)\big\}f(\bar s)\,\mr d\bar s\\
	&\leq g^2-\alpha^2+\ve A^2\\
	&\leq g^2,
\end{align*}
provided that
\begin{equation}
\ve\leq\frac{\alpha^2}{A^2}.
\end{equation}
It immediately follows that part~(b) holds for all $\ve\in[0,1)$, with equality --- which  Lemma~\ref{CalabiCondition}
(below) shows is equivalent to the metric being K\"ahler --- if and only if $\ve=0$.
Part~(c) holds as a consequence of  the gradient restriction $|f_s|\leq1$.

To verify that part~(d) holds, we observe that one has
\begin{align*}
g^2(s_+)-3g^2(s_-)&\geq\big\{\alpha^2+(1-\ve)A^2\big\}-3\alpha^2\\
&=(A^2-2\alpha^2)-\ve A^2\\
&\geq2\delta^2-\ve A^2,
\end{align*}
with the last inequality following from the restrictions on $\alpha$ and $\delta$ that we have imposed  in~\eqref{2d}. Hence we satisfy part~(d) so long as
\begin{equation}
\ve\leq\frac{\delta^2}{A^2}.
\end{equation}

Because $gg_s\geq(1-\ve)f\geq0$, it is clear that part~(e) is satisfied.

\section{Characterizing K\"ahler metrics}	\label{SingularityModel}
In this work, we provide evidence that as they become singular, solutions originating from initial data satisfying our Closeness Assumptions
asymptotically approach the blowdown soliton.  For the reader's convenience, we include here a brief review of metrics related
to that singularity model.

\subsection{The Calabi construction}		\label{CalabiConstruction}
We call a metric on $\mb{CP}^2\#\,\overline{\mb{CP}}^{\,2}$ or $\mc L_{-1}^2$ a
\emph{Calabi metric}  if it is both K\"ahler and $\mr U(2)$-invariant.
As part of a much more general construction \cite{Calabi82}, Calabi has observed that
any $\mr U(2)$-invariant K\"ahler metric on $\mb C^2\backslash(0,0)$ has the form
\begin{equation}	\label{U2-complex}
h_{\,\mb C^2\backslash(0,0)}
=\Big\{e^{-r}\phi\,\delta_{\alpha\beta}
+e^{-2r}(\phi_r-\phi)\,\bar z^\alpha z^\beta\Big\}\,
	\mr dz^\alpha\ten\mr d\bar z^\beta.
\end{equation}
Here  $r:=\log(|z_1|^2+|z_2|^2)$ is Calabi's coordinate, and $\phi(r)=P_r(r)$, where $P$ is the
K\"ahler potential. The metric closes smoothly at the origin, hence induces
a smooth metric on the total space of the bundle $\mc L_{-1}^2$ (or on a neighborhood
of $\mc S^2_-$ in $\mb{CP}^2\#\,\overline{\mb{CP}}^{\,2}$) if and only if there are
$a_0,a_1>0$ such that
\begin{equation}	\label{ClosingCondition}
\phi(r)=a_0+a_1 e^r + a_2 e^{2r}+ \mc O(e^{3r})\quad\mbox{ as }\quad
r\rightarrow-\infty.
\end{equation}
The metric closes smoothly at spatial infinity, hence induces a smooth K\"ahler
metric with respect to the unique complex structure on $\mb{CP}^2\#\,\overline{\mb{CP}}^{\,2}$,
if and only if two conditions hold: i) $\phi_r>0$ everywhere, and ii) there are $b_0>a_0$ and $b_1<0$ such that
\[
\phi(r)=b_0+b_1 e^{-r} + b_2 e^{-2r}+ \mc O(e^{-3r})\quad\mbox{ as }\quad
r\rightarrow\infty.
\]
Alternatively, one may obtain a complete Calabi metric on the noncompact space
$\mc L^2_{-1}$ by imposing conditions at spatial infinity that guarantee completeness;
see, \emph{e.g.,} \cite{FIK03}.
As noted in equation~(19) of that paper, any $\mr U(2)$-invariant metric on
$\mb C^2\backslash(0,0)$ can be written in real coordinates on $\mb R^4\backslash(0,0,0,0)$ as
\begin{equation}		\label{U2-real}
h_{\,\mb R^4\backslash(0,0,0,0)}
=\phi_r\big(\frac14\mr dr\ten\mr dr+\omega^1\ten\omega^1\big)
+\phi\big(\omega^2\ten\omega^2+\omega^3\ten\omega^3\big).
\end{equation}

\subsection{A coordinate transformation}	\label{Diffeos}

A comparison of equations~\eqref{metric-standard} and \eqref{U2-real} shows that a coordinate
transformation is needed to write a Calabi metric in the $s$-coordinate system. We implement
this as follows. Recalling that $s(x,t)$ denotes arclength from the $\mc S^3$ at the ``interior'' point $x=0$, and
motivated by Calabi's (fixed) $r$-coordinate introduced in Section~\ref{CalabiConstruction},
we define here a function
\begin{equation}	\label{magic}
\varrho(s,t):=2\int_0^s\frac{\mr d\bar s}{f(\bar s,t)}.
\end{equation}
The  closing conditions then show that $\varrho\rightarrow\pm\infty$ at $\mc S^2_\pm$. Moreover, one has
\begin{equation}	\label{dsdr}
\ds=\frac12f\,\mr d\varrho,
\end{equation}
so that equation~\eqref{metric-standard} may be re-expressed in the form
\begin{equation}	\label{transformation}
 G = f^2\big(\frac14 \mr d\varrho\ten\mr d\varrho+\omega^1\ten\omega^1\big)
 	   + g^2\big(\omega^2\ten\omega^2+ \omega^3\ten\omega^3\big),
\end{equation}
where we emphasize that the coordinate $\varrho$ is allowed to depend on time.
We note that $\varrho$ and its time evolution depend only on $s(x,t)$ and $f(s(x,t),t)$,
neither of which depend on $\varrho$.

We observe that equation~\eqref{transformation} has the form~\eqref{U2-real} of a
Calabi metric $h$ if and only if $g^2=\phi$ and $f^2=(g^2)_\varrho$, in which case one has
\[
f=gg_s\qquad\mbox{ and }\qquad f_s=gg_{ss}+g_s^2.
\]

We summarize this simple observation, which is crucial to our work here, as follows:

\begin{lemma}	\label{CalabiCondition}
A $[\mc S^2\tilde\times\mc S^2]$-warped Berger metric~\eqref{metric-standard} is K\"ahler if and only if $f=gg_s$.
\end{lemma}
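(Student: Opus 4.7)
The plan is to exploit the coordinate transformation $\rho$ already introduced in Section~\ref{Diffeos}, which rewrites the metric~\eqref{metric-standard} as~\eqref{transformation}, and then match coefficients against Calabi's canonical form~\eqref{U2-real} for a $\mr U(2)$-invariant K\"ahler metric. Since $\rho$ is a valid reparametrization wherever $f>0$ (i.e., off $\mc S^2_\pm$), and all quantities in play extend smoothly to the poles by the closing conditions, it suffices to establish the identity on this open set and then invoke continuity.

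For the implication $f=gg_s \Rightarrow$ K\"ahler, I would observe that by~\eqref{magic} one has $\partial_\rho = (f/2)\partial_s$, so
\[
(g^2)_\rho = \tfrac12 f\,(g^2)_s = fgg_s = f^2.
\]
Defining $\phi(\rho) := g^2$ (as a function of $\rho$ at each fixed time), this gives $\phi_\rho = f^2$, and~\eqref{transformation} takes exactly the Calabi form~\eqref{U2-real}. By the construction recalled in Section~\ref{CalabiConstruction}, $G$ is therefore $\mr U(2)$-invariant K\"ahler with respect to the complex structure induced from $\mc O(-1)$.

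For the converse, suppose $G$ is K\"ahler. Since $G$ is also $\mr U(2)$-invariant on an open set diffeomorphic to $\mb C^2\setminus(0,0)$, Calabi's classification~\eqref{U2-complex}--\eqref{U2-real} forces $G$ to coincide with a metric of the form~\eqref{U2-real} for some potential $\phi$. Matching~\eqref{transformation} against~\eqref{U2-real} term by term, the coefficient of $\omega^2\ten\omega^2+\omega^3\ten\omega^3$ yields $\phi=g^2$, while the coefficient of $\tfrac14\mr d\rho\ten\mr d\rho + \omega^1\ten\omega^1$ yields $\phi_\rho=f^2$. Computing $\phi_\rho$ as in the previous paragraph gives $fgg_s = f^2$, and dividing by $f$ on $\{f>0\}$ yields the desired identity $f=gg_s$, which then extends continuously across $\mc S^2_\pm$.

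The hard part of the lemma, if any, lies in the classification statement~\eqref{U2-complex} due to Calabi~\cite{Calabi82}, which is being quoted rather than reproved. Given that classification, the proof reduces to the change of variables~\eqref{magic} and an algebraic matching of the two expressions for the metric; no analytic estimates are needed.
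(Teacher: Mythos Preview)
Your proposal is correct and takes essentially the same approach as the paper: the paper's proof is precisely the observation preceding the lemma that~\eqref{transformation} has the Calabi form~\eqref{U2-real} if and only if $g^2=\phi$ and $f^2=(g^2)_\rho$, which via $\partial_\rho=(f/2)\partial_s$ is equivalent to $f=gg_s$. You have simply written out both directions more explicitly and been more careful about invoking Calabi's classification for the converse, but the argument is the same.
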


If $G$ is K\"ahler, then its sectional curvatures, which generally take the form~\eqref{curvatures}, take the following special form:
\begin{align*}
\kappa_{12}=\kappa_{31}=\kappa_{02}=\kappa_{03}&=-\frac{g_{ss}}{g},\\
\kappa_{23}&=4\frac{1-g_s^2}{g^2},\\
\kappa_{01}&=-\frac{g_{sss}}{g_s}-3\frac{g_{ss}}{g}.
\end{align*}
As must be true for a K\"ahler metric on a complex surface, the Ricci endomorphism
then has only two eigenvalues,
\[
R_0^0=R_1^1=-\frac{g_{sss}}{g_s}-5\frac{g_{ss}}{g}\qquad\mbox{ and }\qquad
R_2^2=R_3^3=-2\frac{g_{ss}}{g}+4\frac{1-g_s^2}{g^2}.
\]
\smallskip

Because K\"ahler--Ricci flow is strictly parabolic, no time-dependent choice of gauge $s(x,t)$ is needed to ensure parabolicity.
Rather, one can write the K\"ahler--Ricci \textsc{pde} with respect to a time-independent coordinate. The following observation is a particular instance of this general fact.

\begin{lemma}	\label{rho-evolution-proof}
The evolution equation for the coordinate $\varrho$ under Ricci flow takes the form
\begin{equation}	\label{rho-evolution}
\varrho_t=2\int_0^\varrho\left\{\frac{g_{ss}}{g}-\frac{f_sg_s}{fg}+\frac{f^2}{g^4}\right\}\mr d\bar\varrho.
\end{equation}
For a K\"ahler geometry, the integrand in~\eqref{rho-evolution} vanishes pointwise; hence, 
for K\"ahler initial data, the coordinate $\varrho$ is independent of $t$.
\end{lemma}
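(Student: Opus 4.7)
The plan is to compute $\rho_t$ indirectly: since the defining formula gives $\rho_s = 2/f$, I would first compute $\partial_s \rho_t$ by commuting the two derivatives using~\eqref{Commutator}, and then integrate back in $\rho$ to recover $\rho_t$. Rewriting the commutator relation as $\partial_s \partial_t = \partial_t \partial_s + \bigl(\tfrac{f_{ss}}{f}+2\tfrac{g_{ss}}{g}\bigr)\partial_s$ and applying it to $\rho$ gives
\[
\partial_s \rho_t \;=\; \partial_t\!\left(\frac{2}{f}\right) + \frac{2}{f}\left(\frac{f_{ss}}{f} + 2\frac{g_{ss}}{g}\right) \;=\; -\frac{2 f_t}{f^2} + \frac{2f_{ss}}{f^2} + \frac{4 g_{ss}}{fg}.
\]
Substituting the $f$-evolution equation~\eqref{f-evolution} into $f_t$ cancels the $f_{ss}/f^2$ contributions exactly, leaving
\[
\partial_s \rho_t \;=\; \frac{4}{f}\left[\frac{g_{ss}}{g} - \frac{f_s g_s}{fg} + \frac{f^2}{g^4}\right].
\]

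Next, I would change variables from $s$ to $\rho$ using the identity $\partial_\rho = (f/2)\partial_s$ coming from~\eqref{dsdr}; this converts the factor of $4/f$ into $2$, so that
\[
\partial_\rho \rho_t \;=\; 2\left[\frac{g_{ss}}{g} - \frac{f_s g_s}{fg} + \frac{f^2}{g^4}\right].
\]
Integrating from $\rho = 0$ recovers~\eqref{rho-evolution}; the constant of integration is fixed by the identity $\rho(0,t) \equiv 0$, which forces $\rho_t$ to vanish at $\rho=0$.

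For the K\"ahler case, I would invoke Lemma~\ref{CalabiCondition} to substitute $f = g g_s$, which gives $f_s = g_s^2 + g g_{ss}$ and $f^2/g^4 = g_s^2/g^2$. The bracketed integrand then reduces in a single line to
\[
\frac{g_{ss}}{g} - \frac{g_s^2 + g g_{ss}}{g^2} + \frac{g_s^2}{g^2} \;=\; 0,
\]
so the integrand vanishes pointwise and $\rho_t \equiv 0$. The whole argument is essentially bookkeeping; the only place a real error could creep in is the sign of the commutator, since that is the single ingredient on which the K\"ahler cancellation at the end delicately relies.
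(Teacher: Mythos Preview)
Your proof is correct and is essentially the same computation as the paper's, just organized differently: the paper differentiates the defining integral $\rho=2\int_0^x f^{-1}\,(\partial s/\partial\bar x)\,\mr d\bar x$ directly in $t$ (using the evolution of $\partial s/\partial x$, which is equivalent to the commutator~\eqref{Commutator}), whereas you differentiate $\rho_s=2/f$ first and then integrate. The boundary condition $\rho_t|_{\rho=0}=0$ that you use is exactly the paper's choice of lower limit $x=0$ in the integral, and the K\"ahler verification is identical.
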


\begin{remark}
For K\"ahler initial data, one may therefore assume without loss of generality that $\varrho$ is identical to Calabi's
coordinate $r=\log(|z_1|^2+|z_2|^2)$.
\end{remark}

\begin{proof}[Proof of Lemma~\ref{rho-evolution-proof}]
It follows from  equation~(56) in \cite{IKS14} and from ~\eqref{curvatures} above that 
the gauge quantity $\frac{\partial s}{\partial x}$ evolves according to the equation
\[
\Big(\frac{\partial s}{\partial x}\Big)_t=
\left\{\frac{f_{ss}}{f}+2\frac{g_{ss}}{g}\right\}\frac{\partial s}{\partial x}.
\]
Hence, using equations~\eqref{magic} and~\eqref{f-evolution}, we determine that the time derivative of $\varrho$
at fixed $x$ is given by
\begin{align*}
\frac12\varrho_t&=\frac{\partial}{\partial t}
\left(\int_0^x f^{-1}\big(s(\bar x,t),t\big)\,\frac{\partial s}{\partial\bar x}\,\mr d\bar x\right)\\
&=\int_0^x\left\{f^{-1}\Big(\frac{\partial s}{\partial\bar x}\Big)_t-f^{-2}f_t\Big(\frac{\partial s}{\partial\bar x}\Big)
\right\}\mr d\bar x\\
&=2\int_0^s\left\{\frac{g_{\bar s\bar s}}{fg}-\frac{f_{\bar s}g_{\bar s}}{f^2g}+\frac{f}{g^4}\right\}\mr d\bar s.
\end{align*}
This proves the first claim. The second follows by direct computation.
\end{proof}

\subsection{Ricci flow of Calabi metrics}
Lemma~\ref{CalabiCondition} states that the initial metric is Calabi if and only if $f=gg_s$.
Because Ricci flow preserves the K\"ahler condition with respect to the original complex structure
(here, the unique complex structure on $\mb{CP}^2\#\,\overline{\mb{CP}}^{\,2}$) and also preserves
initial symmetries, a solution originating from Calabi initial data remains Calabi for as long as it exists.
This can be seen directly, as we now observe.

In this section (which is not needed for the rest of the chapter) and occasionally below,
we find it convenient to work with $u:=f^2$ and $v:=g^2$. The Ricci flow evolution equations for these quantities 
are given by
\begin{subequations}
\begin{align}
u_t &= u_{ss}-\frac{u_s^2}{2u}+\frac{u_s v_s}{v}-4\frac{u^2}{v^2}, \label {u-evolution} \\
v_t &= v_{ss}+\frac{u_s v_s}{2u}+4\frac{u-2v}{v}.	\label{v-evolution}
\end{align}
\end{subequations}
 On a Calabi solution, one can use the relation $u=v_s^2/4$ (equivalent to  $f=gg_s$) to simplify the evolution
equation above for $v$, thereby obtaining
\begin{equation}	\label{Calabi-v-evolution}
v_t=2v_{ss}+\frac{v_s^2}{v}-8.
\end{equation}
One now has two ways of computing the evolution of $u$. Evaluating the equation above for $u_t$ by
using the K\"ahler condition $u=v_s^2/4$  to convert the  \textsc{rhs} into terms
involving only $v$ and its derivatives, one obtains
\begin{equation}	\label{Calabi-u-evolution}
u_t=\frac12v_sv_{sss}+\frac12\frac{v_{ss}v_s^2}{v}-\frac14\frac{v_s^4}{v^2}.
\end{equation}
On the other hand, one can differentiate the \textsc{rhs} of  $u=v_s^2/4$ directly, use
the commutator $[\partial_t,\partial_s]$ given in equation \eqref{Commutator}, and then
apply \eqref{Calabi-v-evolution}, obtaining
\begin{align*}
\left(\frac{v_s^2}{4}\right)_t
	&=\frac12v_s(v_s)_t\\
	&=\frac12v_s\left\{(v_t)_s-\left(\frac{f_{ss}}{f}+2\frac{g_{ss}}{g}\right)v_s\right\}\\
	&=\frac12v_sv_{sss}+\frac12\frac{v_{ss}v_s^2}{v}-\frac14\frac{v_s^4}{v^2},
\end{align*}
as above. This calculation verifies directly what one knows from general principles: 
that the Calabi condition is preserved by the flow. We note in particular that for a
Calabi solution, the Ricci flow system reduces to a scalar \textsc{pde}, in the sense that the evolution
of $u$ is completely determined by the evolution of $v$.

\begin{remark}
For solutions with initial data satisfying our Closeness Assumptions, the fact that $g_s>0$ everywhere except at $\mc S^2_\pm$ holds initially. For as long as this remains true (possibly only a short time for non-K\"ahler solutions), there is a well-defined function $\theta$ such that
\[
f=\theta\,gg_s.
\]
We note that $\theta\equiv1$ for a K\"ahler solution, and that the evolution equation for $\theta$ is 
\[
\theta_t=\theta_{ss}+2\frac{fg_s-2f_sg}{g^2}(\theta^2-1),
\]
which yields an easy direct proof that the K\"ahler condition is preserved for these geometries.
\end{remark}

\subsection{The blowdown soliton}	\label{ModelSoliton}

Under K\"ahler--Ricci flow, the evolution of an arbitrary Calabi metric
\begin{equation}
\label{Calabih}
h =\Big\{e^{-r}\phi\,\delta_{\alpha\beta}
+e^{-2r}(\phi_r-\phi)\,\bar z^\alpha z^\beta\Big\}\,
	\mr dz^\alpha\ten\mr d\bar z^\beta,
\end{equation}
written in terms of Calabi's fixed $r$-coordinate on $\mc L_{-1}^2$ or on $\mb{CP}^2\#\,\overline{\mb{CP}}^{\,2}$, is determined by the  \textsc{pde}
\begin{equation}	\label{KRF}
\phi_t=\frac{\phi_{rr}}{\phi_r}+\frac{\phi_r}{\phi}-2.
\end{equation}

The blowdown soliton is specified by setting $\phi$ in \eqref{Calabih} equal to a function $\vp$ which (following 
Lemma~6.1 and equation~(27) of~\cite{FIK03} with $\lambda=-1$, $\mu=\sqrt2$, and $\nu=0$)
satisfies the separable first-order \textsc{ode}
\begin{equation}	\label{phi-ode1}
\vp_r=\frac{1}{\sqrt2}\vp-(\sqrt2-1)-\left(1-\frac{1}{\sqrt{2}}\right)\vp^{-1}.
\end{equation}
Rewriting this \textsc{ode} in the form
\[
\mr dr = \frac{\vp\,\mr d\vp}{\vp-1}-\frac{\vp\,\mr d\vp}{\vp+\sqrt2-1},
\]
one can solve it implicitly, obtaining
\begin{equation}	\label{phi-implicit}
e^{r+\chi}=\frac{\vp-1}{\big(\vp+\sqrt2-1\big)^{\sqrt2-1}}.
\end{equation}
The arbitrary constant $\chi$ above reflects the fact that the soliton is unique
only modulo translations in $r$. Examination of formula~\eqref{phi-implicit} also
shows that the soliton is cone-like at spatial infinity and hence complete.

Equation~(24) of \cite{FIK03} implies that the blowdown soliton function $\vp$ also satisfies the second-order \textsc{ode}
\begin{equation}	\label{phi-ode2}
\frac{\vp_{rr}}{\vp_r}+\frac{\vp_r}{\vp}-\sqrt2\vp_r+\vp-2=0.
\end{equation}
It follows from \eqref{phi-ode2} that $\vp$ evolves by
\begin{equation}	\label{BlowdownEvolution}
\vp_t=\sqrt2\vp_r-\vp
\end{equation}
In particular, the soliton evolves by translation and scaling.

\section{Basic estimates}

In this section, we prove several estimates that support our Main Conjecture. 

\subsection{A weak one-sided K\"ahler stability result}	\label{Weak}


We begin by introducing the useful quantity
\begin{equation}	\label{KahlerQuantity}
\psi:=\left(\frac{gg_s}{f}\right)^2-1.
\end{equation}
This quantity $\psi$ is well defined at $\mc S^2_\pm$, because it follows from  l'H\^opital's rule that $\frac{gg_s}{f}\po=gg_{ss}$.

Lemma~\ref{CalabiCondition} tells us that $\psi\equiv0$ if and only if the metric  $G$ from \eqref{metric-standard} is K\"ahler. 
Therefore,  we use $\psi$ to measure, in a precise sense, how far away a solution is from being K\"ahler.
The following result is thus a statement of weak (one-sided) stability for the K\"ahler condition.
Note that part~(b) of our Closeness Assumptions ensures that $\psi\leq0$ at $t=0$.

\begin{lemma}	\label{NearCalabi}
If $-1\leq\psi\leq0$ initially, then $-1\leq\psi\leq0$ as long as the flow exists.
\end{lemma}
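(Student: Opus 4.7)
The lower bound $\psi \geq -1$ is free: by definition $\psi = (gg_s/f)^2 - 1$, so $\psi \geq -1$ holds pointwise at every instant, with equality precisely where $g_s = 0$. All of the work therefore goes into showing that the upper bound $\psi \leq 0$ propagates. My plan is to set this up as a standard scalar parabolic maximum principle argument on the evolution equation satisfied by $\psi$.

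Step~1 (the key computation). Introduce the auxiliary quantity $P := gg_s/f$, so $\psi = P^2 - 1$. Using the Ricci flow equations~\eqref{f-evolution}--\eqref{g-evolution} and the commutator identity~\eqref{Commutator} to commute $\partial_t$ past $\partial_s$, compute $P_t - P_{ss}$ in terms of $f, g$ and their $s$-derivatives. Then derive
\begin{equation*}
\psi_t = \psi_{ss} + A\,\psi_s + B\,\psi
\end{equation*}
for suitable coefficients $A = A(f,g,f_s,g_s,\ldots)$ and $B = B(f,g,f_s,g_s,\ldots)$. The crucial structural fact is that the reaction term has $\psi$ as a factor: this \emph{must} happen because Lemma~\ref{CalabiCondition} identifies $\{\psi \equiv 0\}$ with the K\"ahler condition, and Ricci flow preserves the K\"ahler property with respect to the complex structure on $\mb{CP}^2\#\,\overline{\mb{CP}}^{\,2}$, so $\psi \equiv 0$ is an exact solution of whatever \textsc{pde} $\psi$ obeys. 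I expect this algebraic organization, performed via $P$ and then rewritten through $\psi = P^2 - 1$ (where any residual $-2P_s^2$ term is favorable since it is nonpositive), to be the most tedious part of the argument but ultimately straightforward.

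Step~2 (boundary behavior at the poles). The function $\psi$ is \emph{a priori} only defined on $(s_-,s_+)$, but as noted just after~\eqref{KahlerQuantity}, l'H\^opital's rule gives $gg_s/f \to g g_{ss}$ at $\mc S^2_\pm$, so $\psi$ extends smoothly across both poles, and the closing conditions~\eqref{close-}--\eqref{close+} ensure that the coefficients $A, B$ remain bounded there. Because $\mc S^2\tilde\times\mc S^2$ is closed, no genuine boundary condition is needed: $\psi$ is a smooth function on a compact manifold, and one simply verifies that the \textsc{pde} derived in Step~1 continues to hold at the poles in the smooth sense after changing to a manifestly regular radial coordinate, as in the footnote before~\eqref{curvatures}.

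Step~3 (maximum principle). Suppose, toward contradiction, that $\psi$ exceeds $0$ somewhere at some first time $t_0 > 0$. Choose a space-time point $(s_0, t_0)$ realizing $\sup\psi(\cdot,t_0) = 0$. Since $\psi(s_0,t_0) = 0 > -1$, we have $P(s_0,t_0) \neq 0$, so the maximum condition $\psi_s = 2 P P_s = 0$ forces $P_s(s_0, t_0) = 0$, and consequently $\psi_{ss}(s_0,t_0) \leq 0$. Combining this with $\psi(s_0,t_0) = 0$ kills every term on the right side of the equation from Step~1, yielding $\psi_t(s_0,t_0) \leq 0$, which contradicts the first-crossing hypothesis. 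Hence $\psi \leq 0$ is preserved as long as the solution remains smooth, completing the proof.
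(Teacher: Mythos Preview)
Your overall strategy matches the paper's: compute the evolution of $\psi$, observe the reaction term carries a factor of $\psi$ (plus the favorable $-2P_s^2 = -\psi_s^2/\big(2(\psi+1)\big)$ you anticipate), and apply the maximum principle. The explicit equation the paper obtains is
\[
\psi_t=\psi_{ss}+\left\{3\frac{f_s}{f}-2\frac{g_s}{g}\right\}\psi_s
-\frac{\psi_s^2}{2(\psi+1)}+\left\{4\frac{g_s^2}{g^2}-8\frac{f_sg_s}{fg}\right\}\psi,
\]
so your structural predictions in Step~1 are correct.

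There is, however, a genuine gap in Step~2. Your claim that the closing conditions force the first-order coefficient $A$ to remain bounded at $\mc S^2_\pm$ is false: $A$ contains the term $3f_s/f$, and since $f_s\to\pm1$ while $f\to0$, this blows up like $(s-s_\pm)^{-1}$. Consequently the equation is \emph{not} uniformly parabolic in the $s$-coordinate near the poles, and your Step~3 argument does not apply there without further work. The paper does not try to salvage boundedness by a coordinate change; instead it treats an interior maximum and a maximum on $\mc S^2_\pm$ as two separate cases. At the pole, l'H\^opital gives $\dfrac{f_s\psi_s}{f}\Big|_{\mc S^2_\pm}=\psi_{ss}$ and $\dfrac{f_sg_s}{fg}\Big|_{\mc S^2_\pm}=\dfrac{g_{ss}}{g}$, so the evolution reduces to $\psi_t\big|_{\mc S^2_\pm}=4\psi_{ss}-8\dfrac{g_{ss}}{g}\psi$, and then smoothness of $\psi_\pm(s,\cdot):=\psi(s-s_\pm,\cdot)$ at a maximum yields $\psi_{ss}\leq0$ there. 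You should replace your boundedness claim with this explicit pole analysis.
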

\begin{proof}
It is only necessary to prove the upper bound. The quantity $\psi$ evolves under Ricci flow by
\begin{equation}	\label{psi-evolution}
\psi_t=\psi_{ss}+\left\{3\frac{f_s}{f}-2\frac{g_s}{g}\right\}\psi_s
-\frac{\psi_s^2}{2(\psi+1)}+\left\{4\frac{g_s^2}{g^2}-8\frac{f_sg_s}{fg}\right\}\psi.
\end{equation}
From this equation, it is clear that the condition $\psi\leq0$ is preserved if all
maxima of $\psi$ occur away from $\mc S^2_\pm$.

If a maximum occurs instead at $\mc S^2_\pm$, then we apply  l'H\^opital to determine  that
\[
\frac{f_s\psi_s}{f}\po=\psi_{ss}\qquad\mbox{and}\qquad
\frac{f_sg_s}{fg}\po=\frac{g_{ss}}{g}.
\]
Hence
\[
\psi_t\po=4\psi_{ss}-8\frac{g_{ss}}{g}\psi.
\]
However, smoothness of either function $\psi_\pm(s,\cdot):=\psi(s-s_\pm,\cdot)$ at a
maximum on $\mc S^2_\pm$ shows that $\psi_{ss}\po=(\psi_\pm)_{ss}\po\leq0$. The result follows.
\end{proof}

\subsection{First-derivative estimates}	\label{FirstOrder}

Based on the one-sided K\"ahler stability established in Lemma \ref{NearCalabi}, we now derive estimates on the first derivatives of $f$ and $g$, and consequently on the curvatures which depend on these first derivatives.
We first state an immediate corollary of Lemma \ref{NearCalabi}, which controls $|g_s|$.

\begin{corollary}
\label{cor-gs}
Solutions originating from initial data satisfying our Closeness Assumptions have $|g_s|\leq1$ for as long as they exist.
\end{corollary}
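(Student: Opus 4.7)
The plan is to read off this bound as a direct consequence of Lemma~\ref{NearCalabi} combined with condition~(a) of the Closeness Assumptions. First, since the statement claims preservation ``for as long as they exist,'' I would rely on the fact, noted in the paragraph following the Closeness Assumptions, that Lemma~26 of \cite{IKS14} guarantees that condition~(a), i.e.\ $f \le g$, is preserved under the flow. Second, I would invoke Lemma~\ref{NearCalabi}, whose hypothesis $-1 \le \psi \le 0$ at $t=0$ is exactly the content of Closeness Assumption~(b) together with the fact that $(gg_s/f)^2 \ge 0$; therefore $\psi \le 0$ is preserved, which is equivalent to $g|g_s| \le f$.

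Chaining these two preserved inequalities gives $g|g_s| \le f \le g$. Since the closing conditions~\eqref{close-} and~\eqref{close+} force $g(s_\pm) > 0$, and since $g$ is continuous and positive on the interior by the smoothness of the metric, we may divide by $g$ to conclude $|g_s| \le 1$ at every point away from the poles. At the poles $\mc S^2_\pm$ themselves, the closing conditions $g_-^{(\mathrm{odd})}(0) = 0$ and $g_+^{(\mathrm{odd})}(0) = 0$ give $g_s = 0$, so the inequality $|g_s| \le 1$ holds there trivially.

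There is essentially no obstacle here since the corollary is a one-line algebraic deduction; the only subtlety worth flagging in the write-up is the behavior at $\mc S^2_\pm$, where $f$ vanishes and one must interpret the ratio $gg_s/f$ via l'H\^opital's rule (as is already done in the definition of $\psi$ in~\eqref{KahlerQuantity}). Once that is remarked, the estimate follows instantly from the two preserved pinching conditions.
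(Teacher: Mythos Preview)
Your proposal is correct and follows essentially the same approach as the paper: combine the preserved inequality $g|g_s|\leq f$ from Lemma~\ref{NearCalabi} with the preserved ordering $f\leq g$ and divide. The paper compresses this into the chain $f^2g_s^2\leq g^2g_s^2\leq f^2$, but the content is identical; in fact, since $g>0$ holds even at $\mc S^2_\pm$, your division by $g$ is slightly cleaner than dividing by $f$, and your separate treatment of the poles is harmless but unnecessary.
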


\begin{proof}
Because, as noted above, the ordering $f\leq g$ is preserved by the flow, it follows from
Lemma~\ref{NearCalabi} that $f^2g_s^2\leq g^2g_s^2\leq f^2$. The result follows.
\end{proof}

Next, we  obtain a bound for $|f_s|$.
\begin{lemma}
\label{lem-fs}
If $f\leq g$ initially, then for as long as the flow exists,
\[
|f_s|\leq\max\left\{\frac{2}{\sqrt3},\,\max|f_s(\cdot,0)|\right\}.
\]
\end{lemma}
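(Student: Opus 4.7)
The plan is to derive an evolution equation for $f_s$ from \eqref{f-evolution} via the commutator~\eqref{Commutator}, then apply a parabolic maximum principle to $M := f_s^2$. Using $f_{st} = (f_t)_s - \bigl(\frac{f_{ss}}{f}+2\frac{g_{ss}}{g}\bigr)f_s$ and differentiating \eqref{f-evolution}, one obtains after straightforward cancellations
\begin{equation*}
\partial_t f_s = f_{sss} + 2\frac{g_s}{g}f_{ss} - \frac{f_{ss}}{f}f_s - 2\frac{g_s^2}{g^2}f_s - 6\frac{f^2 f_s}{g^4} + 8\frac{f^3 g_s}{g^5}.
\end{equation*}
Multiplying by $2f_s$ and using $M_s = 2f_s f_{ss}$, $M_{ss} = 2f_s f_{sss} + 2f_{ss}^2$, I would get
\begin{equation*}
\partial_t M = M_{ss} + 2\frac{g_s}{g}M_s - 2 f_{ss}^2 - \frac{f_s^2}{f}M_s - 4\frac{g_s^2}{g^2}f_s^2 - 12\frac{f^2 f_s^2}{g^4} + 16\frac{f^3 g_s f_s}{g^5}.
\end{equation*}

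Next I would check the boundary values at the poles. At $\mc S^2_\pm$, the closing conditions \eqref{close-}--\eqref{close+} force $f_s = \pm 1$ and $f_{ss} = 0$ (since $f_{\pm}$ is odd near $0$), so $M\po = 1 < 4/3$. Hence any time a level set $\{M > 4/3\}$ is approached from below, the supremum is attained at an interior point.

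At an interior maximum of $M$, one has $M_s = 0$ and $M_{ss} \leq 0$, so $f_s f_{ss} = 0$. If $f_s = 0$ there is nothing to prove; otherwise $f_{ss} = 0$, and the evolution inequality collapses to
\begin{equation*}
\partial_t M \leq -4\frac{g_s^2}{g^2}f_s^2 - 12\frac{f^2 f_s^2}{g^4} + 16\frac{f^3 g_s f_s}{g^5}.
\end{equation*}
The cross term is the one indefinite piece, and controlling it is the main obstacle. I would split it using Young's inequality as $2\bigl(2\frac{g_s f_s}{g}\bigr)\bigl(4\frac{f^3}{g^4}\bigr) \leq 4\frac{g_s^2 f_s^2}{g^2} + 16\frac{f^6}{g^8}$, which absorbs the first negative term and yields
\begin{equation*}
\partial_t M \leq \frac{4f^2}{g^4}\Bigl(-3 f_s^2 + 4\frac{f^4}{g^4}\Bigr).
\end{equation*}
Now the preserved inequality $f \leq g$ (Closeness Assumption (a), which is preserved by the flow per Lemma~26 of \cite{IKS14}) gives $f^4/g^4 \leq 1$, so whenever $M = f_s^2 > 4/3$ at this interior max, we have $\partial_t M < 0$. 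The standard parabolic maximum principle then yields $M(\cdot,t) \leq \max\{4/3, M(\cdot,0)\}$ for all times the flow exists, which is the claim after taking square roots.
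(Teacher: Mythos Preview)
Your proof is correct and is essentially the paper's argument: both derive the evolution of $f_s$, control the indefinite cross term $8f^3 g_s/g^5$ by Cauchy--Schwarz/Young, and close using $f\le g$ at the same threshold $2/\sqrt3$; the paper applies the maximum principle directly to $f_s$ (treating $(f_s)_{\max}$ and $(f_s)_{\min}$ separately) whereas you work with $M=f_s^2$, which is a purely cosmetic difference. There is a harmless typo in your $M$-equation --- the coefficient of $M_s$ coming from $-2f_{ss}f_s^2/f$ should be $-f_s/f$, not $-f_s^2/f$ --- but since $M_s=0$ at an interior maximum this does not affect the argument.
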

\begin{proof}
Using equation~~(21) of \cite{IKS14} together with the fact that
\[\Delta\zeta=\zeta_{ss}+(f_s/f+2g_s/g)\zeta_s
\]
holds for any smooth function $\zeta(s,t)$, we see that $f_s$ evolves by
\begin{equation}	\label{f_s-evolution}
(f_s)_t=(f_s)_{ss}+\left\{2\frac{g_s}{g}-\frac{f_s}{f}\right\}(f_s)_s
 	-\left\{6\frac{f^2}{g^4}+2\frac{g_s^2}{g^2}\right\}f_s
	+8\frac{f^3}{g^5}\,g_s.
\end{equation}
Because $f_s\po=\pm1$, we do not need to worry about a maximum of $|f_s|$ on $\mc S^2_\pm$.

We apply the weighted Cauchy--Schwarz inequality $|ab|\leq\epsilon a^2+(1/4\epsilon)b^2$
to the term $8f^3g_s/g^5$ above, with $a=g_s/g$ and $b=f^3/g^4$.
Thus if $(f_s)_{\max}=C>0$ at some time, we obtain
\begin{align*}
\frac{\mr d}{\mr dt} (f_s)_{\max}
&\leq -(f_s)_{\max}\, \left(6 \frac{f^2}{g^4} + 2 \frac{g_s^2}{g^2}\right) + 8 \frac{f^3}{g^5} g_s \\
&\leq\left(\frac{4}{\sqrt3}-2C\right)\frac{g_s^2}{g^2}
+\left(\frac{12}{\sqrt3}\frac{f^4}{g^4}-6C\right)\frac{f^2}{g^4}\\
&\leq 0
\end{align*}
if $C \ge 2/\sqrt3$, because $f/g\leq1$.

A similar argument shows that $\frac{\mr d}{\mr dt} (f_s)_{\min}\geq0$ if $(f_s)_{\min}=-C$ at some time.
\end{proof}

These  uniform bounds on the first derivatives of $f$ and $g$ lead to the following.

\begin{lemma}
\label{lem-curv-1-der}
For any solution originating from initial data satisfying our Closeness Assumptions,
there exists a uniform constant $C$ such that 
\[
|\kappa_{12}| + |\kappa_{31}| + |\kappa_{23}| \le \frac{C}{g^2}
\]
for as long as the flow exists.
\end{lemma}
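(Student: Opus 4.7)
My plan is to bound each of the sectional-curvature expressions appearing in \eqref{curvatures} directly and term by term, using the preserved pinching conditions together with the first-derivative estimates already established. Nothing subtle is needed beyond handling the fact that $f$ vanishes at $\mc S^2_\pm$, which at first glance threatens the term $f_s g_s/(fg)$ appearing in $\kappa_{12}$.

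The ingredients I intend to invoke are as follows. The bound $f\leq g$ is preserved by Lemma~26 of \cite{IKS14}, so $f^2/g^4\leq 1/g^2$. Corollary~\ref{cor-gs} gives $|g_s|\leq 1$, and hence $g_s^2/g^2\leq 1/g^2$. Lemma~\ref{lem-fs} gives a uniform bound $|f_s|\leq C_1$. Finally --- and this is the key point --- Lemma~\ref{NearCalabi} (in the form $\psi\leq 0$) is exactly the preservation of the K\"ahler-pinching condition (b), so that $g|g_s|\leq f$ persists for as long as the flow exists.

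With these in hand, the estimate for $\kappa_{23}$ is immediate:
\[
|\kappa_{23}|\leq\frac{4}{g^2}+\frac{3f^2}{g^4}+\frac{g_s^2}{g^2}\leq\frac{8}{g^2}.
\]
For $\kappa_{12}=\kappa_{31}$, the term $f^2/g^4$ is already controlled by $1/g^2$. The apparent trouble with $f_sg_s/(fg)$ is resolved by using condition~(b) to absorb the $1/f$: writing
\[
\left|\frac{f_sg_s}{fg}\right|=|f_s|\cdot\frac{g|g_s|}{f}\cdot\frac{1}{g^2}\leq\frac{|f_s|}{g^2}\leq\frac{C_1}{g^2},
\]
we obtain $|\kappa_{12}|\leq (1+C_1)/g^2$. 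Setting $C$ equal to the sum of the constants produced above completes the argument.

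I do not expect any real obstacle here: the only point requiring any thought is the vanishing of $f$ at the poles, and condition~(b) is precisely designed to cancel this singularity. The lemma is essentially a bookkeeping consequence of the preserved pinching and the $C^1$-bounds established in the preceding paragraphs.
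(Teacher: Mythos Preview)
Your proof is correct and follows essentially the same approach as the paper's own argument: both use Lemma~\ref{NearCalabi} (equivalently the preserved condition $g|g_s|\leq f$) together with $f\leq g$, Corollary~\ref{cor-gs}, and Lemma~\ref{lem-fs} to bound each term in the curvature formulas~\eqref{curvatures} by a constant multiple of $1/g^2$. The paper merely sketches this; you have written out the term-by-term estimates explicitly, including the key rewriting $|f_sg_s/(fg)|=|f_s|\cdot(g|g_s|/f)\cdot g^{-2}$ that handles the potential singularity at $\mc S^2_\pm$.
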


\begin{proof}
It follows from Lemma \ref{NearCalabi} that the inequality $(gg_s/f)^2\leq1$ persists if it is true initially.
This implies that $|g_s| \leq f/g$ for as long as the flow exists.
Combining this estimate with the identities in~\eqref{curvatures}, using Corollary~\ref{cor-gs}, Lemma~\ref{lem-fs},
and the fact that $f \leq g$, we obtain
\[
|\kappa_{12}| + |\kappa_{31}| + |\kappa_{23}| \le \frac{C}{g^2}.
\]
\end{proof}

\subsection{Second-derivative estimates}		\label{SecondOrder}

Here we derive estimates for the remaining curvatures ---
those that depend on second-order derivatives of $(f,g)$.

\begin{lemma}
\label{lem-k02}
For any solution originating from initial data satisfying our Closeness Assumptions,
there exists a uniform constant $C$ such that 
\[|\kappa_{02}| = \left|\frac{g_{ss}}{g}\right| \le \frac{C}{g^2}\]
for as long as the flow exists.
\end{lemma}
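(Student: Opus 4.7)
I propose proving this estimate by a maximum principle argument on a scalar quantity that measures $g g_{ss}$. The motivation is that in the K\"ahler regime, Lemma~\ref{CalabiCondition} gives $f = g g_s$, and differentiating in $s$ yields the algebraic identity $g g_{ss} = f_s - g_s^2$. Corollary~\ref{cor-gs} and Lemma~\ref{lem-fs} would then immediately give $|g g_{ss}| \leq C$, hence $|\kappa_{02}| \leq C/g^2$. This strongly suggests introducing
\[
\Xi := g g_{ss} + g_s^2 - f_s \;=\; (g g_s - f)_s,
\]
which vanishes identically on Calabi solutions and measures the failure of the K\"ahler identity. Because of the uniform bounds on $g_s$ and $f_s$, uniformly bounding $\Xi$ is equivalent to uniformly bounding $gg_{ss}$.

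The boundary behavior of $\Xi$ at $\mc S^2_\pm$ is controlled by Lemma~\ref{NearCalabi}. Indeed, the closing conditions \eqref{close-}, \eqref{close+} give $f \to 0$, $g_s \to 0$, and $f_s \to \pm 1$ at the poles, while a l'H\^opital computation on $\psi \leq 0$ (analogous to the one at the end of the proof of Lemma~\ref{NearCalabi}) yields $|gg_{ss}|\po \leq 1$. Hence $\Xi\po$ is uniformly bounded.

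The core of the argument is to derive the evolution equation of $\Xi$ using \eqref{f-evolution}, \eqref{g-evolution}, \eqref{f_s-evolution}, and the commutator \eqref{Commutator}, and to show that it takes the form of a strictly parabolic equation for $\Xi$ whose zeroth-order terms and forcing are controlled by the already-established bounds: $f \leq g$, $|g_s| \leq 1$, $|f_s| \leq C$, and the K\"ahler pinching $|gg_s| \leq f$ (from $\psi \leq 0$). Once this is in hand, applying the parabolic maximum principle --- with L'H\^opital treatment at the poles as in Lemma~\ref{NearCalabi} --- yields $|\Xi| \leq C$, and combined with the first-derivative bounds this delivers $|g g_{ss}| \leq C$, i.e., $|\kappa_{02}| \leq C/g^2$, as required.

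The main technical obstacle is the explicit derivation of the evolution equation for $\Xi$. Because $g_{ss}$ appears inside $\Xi$, computing $\Xi_t$ requires the second $s$-derivative of \eqref{g-evolution}, which produces a $g_{ssss}$ term plus commutator contributions of the form $(f_{ss}/f + 2g_{ss}/g)\cdot g_{ss}$ that are formally quadratic in $g_{ss}$. Organizing these terms so that the quadratic-in-$g_{ss}$ contributions reassemble into the parabolic operator applied to $\Xi$ (leaving behind only terms controllable by the first-derivative bounds and the pinching $gg_s \leq f$) will be the heart of the calculation; the pinching condition is needed precisely to absorb the cross terms $f_s g_s /(fg)$ and $(f^2/g^4) g_s$ arising from the nonlinearities in \eqref{g-evolution}.
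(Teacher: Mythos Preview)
Your proposal is a genuinely different route from the paper, and the motivation via the K\"ahler identity $gg_{ss}=f_s-g_s^2$ is attractive. However, there is a real gap: the hoped-for reorganization of the quadratic second-order terms does not close. If one carries out the computation you describe (it helps to first derive the evolution of $W:=gg_s-f$ and then differentiate), one finds
\[
\Xi_t-\Delta\Xi \;=\; -\frac{4g_s}{g}\,\Xi_s
\;+\;\Big(\!-\frac{f_s^2}{f^2}+\frac{2g_s^2}{g^2}-\frac{4g_{ss}}{g}\Big)\Xi
\;-\;2W\Big(\frac{f_s}{f^2}+\frac{2}{g^2}\Big)f_{ss}
\;+\;\{\mbox{terms controlled by }f_s,g_s,f/g\}.
\]
Two obstructions appear. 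First, substituting $g_{ss}=(\Xi-g_s^2+f_s)/g$ shows that the zeroth-order term carries $-4\Xi^2/g^2$; this sign delivers the \emph{upper} bound on $\Xi$ but kills the maximum-principle argument for the \emph{lower} bound. Second, the evolution contains a linear $f_{ss}$ term whose coefficient $-2W(f_s/f^2+2/g^2)$ vanishes on K\"ahler solutions (as it must) but is only bounded by $C/f$ in general, since $|W|\le 2f$ is the best available; there is no $f_{ss}^2$ term to absorb it by Cauchy--Schwarz, and $\kappa_{01}=-f_{ss}/f$ is not yet controlled at this point of the argument.

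The paper's proof works instead with $Q=gg_{ss}-Ag_s^2-Bf_s^2$ for carefully chosen constants. The choice $A=1$ produces the coefficient $2(A-1)=0$ on $g_{ss}^2$, eliminating the first obstruction; the quadratic term $-Bf_s^2$ generates $+2Bf_{ss}^2$ in the evolution, and with $B=2$ this is enough to absorb all linear $f_{ss}$ contributions via weighted Cauchy--Schwarz. One then bounds $Q$ from below (and $\tilde Q=gg_{ss}+Ag_s^2+Bf_s^2$ from above) by a standard maximum-principle argument, treating the poles by l'H\^opital as you anticipated. Your proposal could perhaps be repaired by replacing $-f_s$ with $-Bf_s^2$ and flipping the sign on $g_s^2$, but at that point it essentially becomes the paper's argument; the elegant ``K\"ahler-deviation'' quantity $\Xi$ unfortunately does not carry enough structure on its own.
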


\begin{proof}
We define $Q = g g_{ss} - A g_s^2 - Bf_s^2$, where $A, B > 0$ are to be suitably chosen below.
We first show that  there exists a uniform constant $C$ so that $Q \ge -C$ for as long as the flow exists.
A straightforward computation shows that the evolution of $Q$ is given by
\begin{multline}
\label{eq-Q}
\frac{\partial Q}{\partial t} = \Delta Q + \frac{12 B f^2 f_s^2}{g^4}
+ \frac{4 f_s^2}{g^2} + \frac{24 f^2 g_s^2}{g^4} + \frac{12A f^2 g_s^2}{g^4} + \frac{2Af_s^2 g_s^2}{f^2} + \frac{4Bf_s^2g_s^2}{g^2} \\
+ \frac{2g_s^4}{g^2} + \frac{2A g_s^4}{g^2} + 2B f_{ss}^2 + 2(A - 1) g_{ss}^2 
- g g_{ss}\left(\frac{4f^2}{g^4} + \frac{2f_s^2}{f^2} + \frac{4Ag_s^2}{g^2}\right) \\
- \frac{16 B f^3 f_s g_s}{g^5} - \frac{24 f f_s g_s}{g^3} - \frac{8A f f_s g_s}{g^3} + \frac{2g f_s^3 g_s}{f^3} - \frac{8 g_s^2}{g^2} - \frac{8A g_s^2}{g^2}  \\
+ \frac{4 f f_{ss}}{g^2} + \frac{4B f_s^2 f_{ss}}{f} - \frac{8B f_s g_s f_{ss}}{g} - \frac{2g f_s g_s f_{ss}}{f^2},
\end{multline}
where as noted above, $\Delta Q=Q_{ss}+(f_s/f+2g_s/g)Q_s$.
We observe that  l'H\^opital's rule implies that the terms
\[
\frac{Q_s f_s}{f}, \qquad \frac{2Af_s^2 g_s^2}{f^2}, \qquad 2g f_s^2\, \frac{(f_s g_s - f g_{ss})}{f^3},
\qquad \frac{4B f_s^2 f_{ss}}{f},
\]
appearing in equation~\eqref{eq-Q} are well defined and smooth at $\mc S^2_{\pm}$.
We now distinguish between two cases.
\begin{case}
\label{case-1}
A minimum of $Q$ occurs away from $\mc S^2_\pm$.
\end{case}
We assume that at a minimum of $Q$ at some time $t$, we have $g g_{ss} - Ag_s^2 - Bf_s^2 \le -\bar{C}$
for a large constant $\bar{C}>0$ to be chosen. Because we are bounding $Q$ from below, we may assume that $g_{ss}\leq0$.
Then since Corollary~\ref{cor-gs} and Lemma~\ref{lem-fs} give uniform bounds for $|f_s|$ and $|g_s|$,
we may choose $\bar{C}$ sufficiently large relative to $A$ and $B$ such that
\begin{equation}
\label{eq-help-term}
-g g_{ss} \left(\frac{4f^2}{g^4} + \frac{2f_s^2}{f^2} + \frac{4Ag_s^2}{g^2}\right) \ge \frac{\bar{C} f^2}{2 g^4}
+ \frac{\bar{C} f_s^2}{2f^2} + \frac{\bar{C} A g_s^2}{g^2}.
\end{equation}
It then follows from \eqref{eq-Q} that at a minimum of $Q$  at time $t$, we have
\begin{equation}
\label{eq-Q100}
\begin{split}
\frac{\mr d}{\mr dt} Q_{\min} &\ge 2Bf_{ss}^2 +  \frac{\bar{C} f^2}{2 g^4}
+ \frac{\bar{C} f_s^2}{2f^2} + \frac{\bar{C} A g_s^2}{g^2} \\
&- \frac{16 B f^3 f_s g_s}{g^5} - \frac{24 f f_s g_s}{g^3} - \frac{8A f f_s g_s}{g^3} + \frac{2g f_s^3 g_s}{f^3}
- \frac{8 g_s^2}{g^2} - \frac{8A g_s^2}{g^2}\\ 
&+ \frac{4 f f_{ss}}{g^2} + \frac{4B f_s^2 f_{ss}}{f} - \frac{8B f_s g_s f_{ss}}{g} - \frac{2g f_s g_s f_{ss}}{f^2}.
\end{split}
\end{equation}
To estimate the terms in \eqref{eq-Q100} containing $f_{ss}$, we use Lemma~\ref{NearCalabi}, Corollary~\ref{cor-gs},
Lemma~\ref{lem-fs}, the facts that $f \le g$ and $|g_s|\leq f/g$, and a
weighted Cauchy--Schwarz inequality to determine  that there exists a uniform constant $C'$ such that
\begin{align*}
&\left|\frac{4 f f_{ss}}{g^2}\right| + \left|\frac{4B f_s^2 f_{ss}}{f}\right| + \left|\frac{8B f_s g_s f_{ss}}{g}\right|
+ \left|\frac{2g f_s g_s f_{ss}}{f^2}\right|\\
&\quad\leq\left(\frac 12 f_{ss}^2 + \frac{C' f^2}{g^4}\right) + \left(\frac{B}{2}\, f_{ss}^2 + C' B \frac{f_s^2}{f^2}\right)
+ \left(\frac B2 f_{ss}^2 + C'B \frac{g_s^2}{g^2}\right) + \left(\frac 12 f_{ss}^2 + C'\frac{f_s^2}{f^2}\right) \\
&\quad\leq(B+1) f_{ss}^2 + C'(B+1)\left( \frac{f_s^2}{f^2} + \frac{g_s^2}{g^2} +\frac{f^2}{g^4}\right).
\end{align*}
The remaining terms in~\eqref{eq-Q100} can be estimated in a similar manner. Thus we find that
\[
\begin{split}
\frac{\mr d}{\mr dt} Q_{\min} &\ge  2Bf_{ss}^2 +  \frac{\bar{C} f^2}{2 g^4} + \frac{\bar{C} f_s^2}{2f^2} + \frac{\bar{C} A g_s^2}{g^2} \\
&\qquad- C'(1 + A + B) \left(\frac{f^2}{g^4} + \frac{f_s^2}{f^2} + \frac{g_s^2}{g^2}\right) - (B + 1) f_{ss^2} \\
&\ge 0,
\end{split}
\]
if we choose $A = 1$, $B = 2$ and $\bar{C}$ sufficiently large so that $\bar{C} > C'(1 + A + B)$.
Therefore, in this case, either $Q \ge -\bar{C}$ or $\frac{\mr d}{\mr dt}Q_{\min} \ge 0$.

\begin{case}
\label{case-2}
A minimum of $Q$ occurs on $\mc S^2_\pm$.
\end{case}
The only difference from Case \ref{case-1} is that one must deal with the term
$\frac{Q_s f_s}{f}$ at $\mc S^2_\pm$. We apply l'H\^opital's rule to see that
\[
\frac{Q_s f_s}{f}\Big|_{\mc S^2_{\pm}}
= \left(Q_{ss} + Q_s \frac{f_{ss}}{f_s}\right)\Big|_{\mc S^2_{\pm}}
 = Q_{ss}\big|_{\mc S^2_{\pm}}.\]
However, smoothness of either function $Q_{\pm}(s,\cdot) :=Q(s - s_{\pm}, \cdot)$ at a minimum on $\mc S^2_\pm$
shows that $Q_{ss}|_{\mc S^2_{\pm}} = (Q_{\pm})_{ss}|_{\mc S^2_{\pm}} \ge 0$. A similar computation as in
Case~\ref{case-1} then yields
\[\frac{\mr d}{\mr dt}Q|_{\mc S^2_{\pm}} \ge 0,\]
unless $Q_{\min}(t) = Q(\cdot,t)|_{\mc S^2_{\pm}} \ge -\bar{C}$, for the constant $\bar{C}$ chosen in Case~\ref{case-1}.

Combined, Case~\ref{case-1} and Case~\ref{case-2} show that
\[
Q(\cdot,t) \ge \min\big\{-\bar{C},\,Q_{\min}(0)\big\}.
\]
In particular, this implies that
\[\frac{g_{ss}}{g} \ge -\frac{C}{g^2},\]
for a uniform constant $C$ as long as the flow exists.

Finally,  considering the quantity $\tilde{Q} := g g_{ss} + Ag_s^2 + Bf_s^2$ and bounding
$\tilde{Q}$ from above using similar arguments yields a uniform constant $C$ such that 
\[
\frac{g_{ss}}{g} \le \frac{C}{g^2}
\]
for as long as the flow exists. This concludes the proof of the Lemma.
\end{proof}

We now define
\begin{equation}	\label{define-lambda}
\mu(t) := \min_{\mc S^2\tilde\times\mc S^2} g(\cdot,t),
\end{equation}
observing that
Lemmas~\ref{lem-curv-1-der} and \ref{lem-k02} imply that there exists a uniform
constant $C$ such that as long as the flow exists, one has
\begin{equation}
\label{eq-help-111}
|\kappa_{12}| + |\kappa_{13}| + |\kappa_{23}| + |\kappa_{02}| + |\kappa_{03}| \le \frac{C}{\mu^2}.
\end{equation} 

\begin{remark}	\label{kappa-dichotomy}
Controlling the curvature $\kappa_{01}$ is considerably more subtle.
This is because, even for K\"ahler solutions, the alternative in statement \textsc{(ii)} of Lemma~\ref{lem-k01}
below is truly necessary: estimate~\eqref{eq-k01-above} need not hold unless such solutions
originate from initial data satisfying part~(d) of our Closeness Assumptions. Solutions for which part~(d)
is false can have $f\searrow0$ uniformly as $t\nearrow T$, with $g(\cdot,T)>0$ everywhere.
Each such (unrescaled) solution converges in the Gromov--Hausdorff sense to a
$\mb{CP}^1$ of multiplicity two; see Theorem~1.1 of \cite{SW11}.
\end{remark}

\begin{lemma}	\label{lem-k01}
For any solution originating from initial data satisfying our Closeness Assumptions, the following are true:\\
\textsc{(i)} The sectional curvature $\kappa_{01}=-f_{ss}/f$ satisfies
\begin{equation}
\label{eq-k01-below}
\kappa_{01} \ge -\frac{C}{g^2}
\end{equation}
for a uniform constant $C$.\\
\textsc{(ii)} Either there is an analogous upper bound
\begin{equation}
\label{eq-k01-above}
\kappa_{01} \le \frac{C}{\mu^2},
\end{equation}
or any finite-time singularity is Type-I.
\end{lemma}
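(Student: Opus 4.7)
The plan is to prove both parts via a maximum principle argument on a suitable auxiliary quantity, closely following the template of Lemma~\ref{lem-k02}.

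For part~(i), I would introduce an auxiliary quantity of the form
\[
Q = g^2 \kappa_{01} + A g_s^2 + B f_s^2,
\]
where the constants $A,B>0$ are to be chosen. Note that $\kappa_{01} = -f_{ss}/f$ is smooth at $\mc S^2_\pm$ because, by l'H\^opital's rule, $f_{ss}/f\po = f_{sss}/f_s\po$ is well defined; thus $Q$ is smooth everywhere. Proving \eqref{eq-k01-below} amounts to showing $Q \ge -C$ uniformly. I would compute $\partial_t Q$ using \eqref{f-evolution}, \eqref{g-evolution}, the commutator identity \eqref{Commutator}, and direct differentiation. At a minimum of $Q$, applying $\Delta Q \ge 0$ and $Q_s = 0$, I would bound the resulting reaction terms using Lemma~\ref{NearCalabi} (which gives $|g_s| \le f/g$), Corollary~\ref{cor-gs} and Lemma~\ref{lem-fs} (uniform bounds on $|g_s|,|f_s|$), and Lemma~\ref{lem-k02} (the bound $|g_{ss}/g| \le C/g^2$). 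Weighted Cauchy--Schwarz inequalities would absorb awkward cross-terms; choosing $A,B$ large enough would force the reaction to be nonnegative when $Q$ is sufficiently negative. The boundary $\mc S^2_\pm$ would be handled as in Case~\ref{case-2} of Lemma~\ref{lem-k02}, using l'H\^opital to interpret $f_s Q_s/f\po = Q_{ss}\po$.

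For part~(ii), I would first attempt the analogous upper bound via $\tilde Q = -g^2 \kappa_{01} - A g_s^2 - B f_s^2$. The obstacle is the reaction contribution coming (after differentiation) from the $+8f^3 g_s/g^5$ piece of \eqref{f_s-evolution}: this term has an uncooperative sign and cannot be absorbed using only the previous lemmas. Instead, I would analyze the evolution of $K(t) := \max_{\mc S^2 \tilde\times \mc S^2} \kappa_{01}(\cdot,t)$ directly. Using the maximum principle together with Lemmas~\ref{lem-curv-1-der} and~\ref{lem-k02} to bound all other curvature components by $C/\mu^2$, the evolution of $\kappa_{01}$ at its maximum point yields a differential inequality of the form
\[
K'(t) \le C K(t)^2 + C \mu(t)^{-4}.
\]
If \eqref{eq-k01-above} holds, we are done. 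If \eqref{eq-k01-above} fails, then $K \mu^2$ is unbounded as $t \nearrow T$, so $K^2$ dominates $\mu^{-4}$ along a suitable sequence of times, producing the simpler ODI $K' \le C'K^2$. Standard integration then gives $K(t) \le C''/(T-t)$. Combining this with the distance-based lower bound $\mu(t)^2 \ge c(T-t)$ (which follows from Lemma~\ref{lem-k02} since a shrinking fiber cannot collapse faster than the Ricci lower bound permits) together with Lemmas~\ref{lem-curv-1-der} and~\ref{lem-k02} yields $\sup|\Rm|(T-t) \le C$, i.e., the Type-I property.

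The main obstacle will be deriving the ODI for $K$ with the stated structure: verifying that when the reaction terms in the evolution of $\kappa_{01}$ (obtained by differentiating \eqref{f_s-evolution} and applying the commutator \eqref{Commutator}) are expanded, every uncooperative contribution can be bounded either by $K^2$ or by $\mu^{-4}$, with no uncontrolled mixed terms. This requires carefully tracking all curvature couplings and exploiting the K\"ahler-type pinching $|g_s| \le f/g$ from Lemma~\ref{NearCalabi} to convert factors of $g_s^2/g^2$ into factors of $f^2/g^4$ controlled by Lemmas~\ref{lem-curv-1-der} and~\ref{lem-k02}. The asymmetry between the unconditional bound in part~(i) and the conditional bound in part~(ii) precisely reflects which side of the inequality is compatible with the sign of the $+8f^3 g_s/g^5$ reaction term.
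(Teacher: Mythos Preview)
Your approach to part~\textsc{(i)} would probably work but is far more laborious than necessary. The paper's argument is a one-liner: since scalar curvature $R$ is a supersolution of the heat equation, $R\geq r_0$ for all time; writing $R=\kappa_{01}+2\kappa_{02}+2\kappa_{12}+\kappa_{23}$ and invoking the bounds already established in Lemmas~\ref{lem-curv-1-der} and~\ref{lem-k02} (together with $\frac{d}{dt}g_{\max}\leq0$) immediately yields $\kappa_{01}\geq -C/g^2$. No auxiliary quantity or evolution computation is needed.

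Your approach to part~\textsc{(ii)} contains a genuine error. The differential inequality $K'\leq C'K^2$ does \emph{not} integrate to $K(t)\leq C''/(T-t)$; the inequality points the wrong way. Indeed, $K'\leq C'K^2$ is the standard doubling-time estimate that holds for \emph{every} Ricci flow and gives only a lower bound on the blow-up time, never a Type-I upper bound on curvature. (Compare: $y'\leq y^2$ is satisfied by $y=(T-t)^{-2}$, which is Type-II.) Your claim that $\mu^2\geq c(T-t)$ follows from Lemma~\ref{lem-k02} is also unjustified; in the paper this inequality is derived only later, from Lemma~\ref{SouthPole} and estimate~\eqref{eq-s-}, which themselves depend on Lemma~\ref{lem-k01}.

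The paper's argument for~\textsc{(ii)} is structurally different from any ODI analysis. Assuming~\eqref{eq-k01-above} fails, one takes a blow-up sequence at points where $\kappa_{01}\mu^2\to\infty$; after rescaling, all curvatures except $\kappa_{01}$ tend to zero by~\eqref{eq-help-111}, while $\kappa_{01}\geq0$ in the limit by part~\textsc{(i)}. Hamilton's splitting theorem then forces the universal cover of the limit to split off an $\mb R^2$ factor, leaving a complete ancient $2$-dimensional solution with bounded positive scalar curvature. The classification of such solutions (\cite{DHS},~\cite{DS}) and Perelman's $\kappa$-noncollapsing rule out the cigar and King--Rosenau solutions, forcing the limit to be $\mc S^2\times\mb R^2$, which is Type-I. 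The Type-I conclusion thus comes from rigidity of the singularity model, not from an ODE comparison.
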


\begin{proof}
Because the scalar curvature $R$ is a supersolution of the heat equation (in the sense that $(\partial_t - \Delta)\, R \ge 0$),
there exists a constant $r_0$ depending only on the initial data such that for as long as the flow exists, one has
\[r_0 \le R = \kappa_{01} + \kappa_{02} + \kappa_{03} + \kappa_{12} + \kappa_{23} + \kappa_{31},\]
where $\kappa_{02} = \kappa_{03}$. Using this together with Lemma~\ref{lem-curv-1-der}, Lemma~\ref{lem-k02},
and the fact that $\frac{\mr d}{\mr dt}g_{\max}\leq0$, we get the lower bound~\eqref{eq-k01-below}.

To prove \textsc{(ii)},  we assume that~\eqref{eq-k01-above} fails and use a blow-up argument
at a finite-time singularity.  In particular, we assume that $T < \infty$ is a singular time for the flow, and that
\begin{equation}
\label{eq-contra-ass}
\limsup_{t\to T} \left( \sup_{\mc S^2\tilde\times\mc S^2} \kappa_{01}(\cdot,t) \mu(t)^2\right) = \infty.
\end{equation}
We now let $t_i\to T$ as $i\to\infty$ such that
\[\
\sup_{t\in [0,t_i]} \left(\sup_{\mc S^2\tilde\times\mc S^2} \kappa_{01}(\cdot,t) \mu(t)^2\right) = \kappa_{01}(p_i,t_i) \mu(t_i)^2
\]
for some $p_i\in M$, and we let $K_i := \kappa_{01}(p_i,t_i)$.  It follows from our choice of $t_i$ that 
\begin{equation}
\label{eq-infty}
K_i \mu(t_i)^2 \to \infty\quad \mbox{as}\quad i\to\infty.
\end{equation}

We define the blow-up sequence of solutions $G_i$ of the metric of the form~\eqref{metric-standard} by
\[G_i(\cdot,t) := K_i \,G(\cdot, t_i + t K_i^{-1}),
\]
for $t$ satisfying
\[
-K_i t_i \le t < (T - t_i)K_i.
\]
We claim that the curvatures of the rescaled metrics $G_i$ are uniformly bounded.
To prove the claim for $\kappa_{12}$, say, we begin by noting that estimate~\eqref{eq-help-111} implies that
\begin{equation}
\label{eq-k12-ex}
\big|\kappa_{12}^i(\cdot,t)\big| = \frac{|\kappa_{12}(\cdot,t_i+tK_i^{-1})|}{K_i} \le \frac{C}{K_i \mu(t_i+tK_i^{-1})^2}.
\end{equation}
It follows from Remark~1 of \cite{IKS14} that the evolution equation for $g(\cdot,t)$ can be written as
\[\frac{\partial}{\partial t} (\log g) = -\kappa_{02} - \kappa_{23} - \kappa_{31},\]
which implies that
\[\left|\frac{\partial}{\partial t}\log g\right| \le \frac{C}{g^2},\]
and therefore that
\[\left|\frac{\mr d}{\mr dt}\mu^2\right| \le C.\]
Integrating this over $[t_i+tK_i^{-1}, t_i]$ yields
\[|\mu(t_i+tK_i^{-1})^2 - \mu(t_i)^2| \le \frac{C}{K_i},\]
for, say,  $t\in[-1,0]$. This implies that
\[\mu(t_i+tK_i^{-1})^2 \ge \mu(t_i)^2 - \frac{C}{K_i},\]
whereupon \eqref{eq-k12-ex} implies for $t\in [-1,0]$ that
\[
|\kappa_{12}^i(\cdot,t)| \le \frac{C}{K_i\mu(t_i)^2 - C} \to 0
\]
as $i\to \infty$, because \eqref{eq-infty} holds.

To bound the remaining curvatures of the rescaled metrics, we use similar arguments together with \eqref{eq-help-111} to conclude that 
\[
|\kappa_{12}^i(\cdot,t)| + |\kappa_{13}^i(\cdot,t)| + |\kappa_{23}^i(\cdot,t)|
+ |\kappa_{02}^i(\cdot,t)| + |\kappa^i_{03}(\cdot,t)| \le \frac{C}{K_i\mu(t_i)^2 - C} \rightarrow 0
\]
as $i\to \infty$, and we use  \eqref{eq-k01-below} to show that
\[\kappa_{01} \ge -\frac{C}{K_i\mu(t_i)^2 - C} \to 0,\]
as $i\to\infty$.

After extracting a convergent subsequence, we determine that $(\mc S^2\tilde\times\mc S^2, G_i(t), p_i)$ converges in the
pointed Cheeger--Gromov--Hamilton sense to a complete ancient solution
\[(\mc M^4_{\infty}, \mc{G}_{\infty}(t), p_{\infty})\]
that exists for $t\in (-\infty, t^*)$ where $t^*:= \lim_{i\to\infty} (T - t_i)\, K_i \le \infty$. 
Moreover, one has
\begin{equation}
\label{eq-kappa-infty}
\kappa_{12}^{\infty} = \kappa_{13}^{\infty} = \kappa_{23}^{\infty} = \kappa_{02}^{\infty} = 0,
\qquad\mbox{ and }\qquad\kappa_{01}^{\infty} \ge 0,
\end{equation}
with $\kappa_{01}(p_{\infty},0) = 1$. By applying Hamilton's splitting theorem  \cite{Ha1} twice,
we find that the universal cover $(\tilde{\mc M}^4_{\infty}, \mathcal{G}_{\infty},p_{\infty})$ splits isometrically
as the product of $\mathbb{R}^2$ and a complete  ancient solution $(\mc N^2, \mathcal{G}_{\infty}|_{\mc N^2})$
with bounded positive scalar curvature.
It follows from the classification in \cite{DHS} and \cite{DS}  that $(\mc N^2, \mc{G}_{\infty}|_{\mc N^2})$ is either
the King--Rosenau solution, the cigar, or the round sphere $\mc S^2$. In the former case, it is a standard
fact that by choosing a modified sequence $\tilde p_i$ of blow-up points, one can obtain the cigar as a limit.
But this is impossible by Perelman's $\kappa$-non-collapsing result~\cite{Pe}. So the limit must be isometric
to one of the products $\mc S^2\times\mb R^2$ or $\mb R^2\times\mc S^2$.
In either case,\footnote{For the metrics we study here,
the case $\mc S^2\times\mb R^2$ corresponds to the $g^2\big(\omega^2\ten\omega^2 + \omega^3\ten\omega^3\big)$
factor becoming flat after rescaling, while the case $\mb R^2\times\mc S^2$ corresponds to the
$\big(\ds\ten\ds+f^2\,\omega^1\ten\omega^1\big)$ factor becoming flat.}
the singularity is Type-I, and we have $\kappa_{01}\leq C/(T-t)$.
\end{proof}

\section{Singularity formation}	\label{Singular}
In this section, we investigate finite-time singularity formation for Ricci flow solutions originating from initial data
satisfying our Closeness Assumptions, with the objective --- not fully achieved --- of proving that
all such singularities are Type-I, with $|\mc S^2_-|=0$ at the singular time $T<\infty$.
This requires some work, for the following reason. Away from the special fibers $\mc S^2_\pm$, the geometry of
$(\mc S^2\tilde\times\mc S^2,G)$ is that of $(a,b)\times\mc S^3$. So without appropriate assumptions on the initial data,
it is highly plausible that neckpinch singularities like those analyzed in~\cite{IKS14} could develop at a
fiber $\{s_0\}\times\mc S^3$ far from $\mc S^2_-$. As explained below, we do not expect this possibility occurs
for solutions originating from initial data satisfying our Closeness Assumptions.
\smallskip

As proved in  \cite{SW11} and as noted above, the behavior of K\"ahler solutions depends strongly on
whether $|\mc S^2_+| < 3|\mc S^2_-|$, $|\mc S^2_+| = 3|\mc S^2_-|$, or $|\mc S^2_+| > 3|\mc S^2_-|$.
It follows from part~(d) of our Closeness  Assumptions that  the solutions we study have $|\mc S^2_+| > 3|\mc S^2_-|$ initially.
Our first result in this section proves that this threshold condition is preserved by the flow, even for
non-K\"ahler solutions, provided they originate from initial data satisfying the Closeness Assumptions.

\begin{lemma}	\label{big-end}
Solutions originating from initial data satisfying our Closeness Assumptions satisfy
 \[
 g^2(s_+,t) - 3g^2(s_-,t) \ge \delta^2
 \]
 for as long as they exist.
\end{lemma}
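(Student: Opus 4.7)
The approach is to differentiate $g^2(s_+,t) - 3 g^2(s_-,t)$ in time and show the derivative is non-negative pointwise in $t$. Because the closing conditions \eqref{close-} and \eqref{close+} force $f = 0$, $|f_s| = 1$, $f_{ss} = 0$ (from the odd Taylor expansion of $f$ at the poles), and $g_s = 0$ at $s_\pm$, the evolution equation \eqref{g-evolution} simplifies dramatically there. The only subtle term is $(f_s/f)\,g_s$, which is of the form $0/0$; applying l'H\^opital once gives $(f_{ss}g_s + f_s g_{ss})/f_s \to g_{ss}(s_\pm)$. Combining with $g_s^2/g \to 0$ and $2(f^2 - 2g^2)/g^3 \to -4/g(s_\pm)$, one obtains the clean identity
\[
\frac{d}{dt} g^2(s_\pm, t) \;=\; 4\,g(s_\pm)\,g_{ss}(s_\pm) \;-\; 8.
\]

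The second step is to bound $g g_{ss}$ at the poles using Lemma~\ref{NearCalabi}. Since $gg_s/f\,\big|_{\mc S^2_\pm} = g g_{ss}$ by the same l'H\^opital argument, the K\"ahler-deviation quantity $\psi = (gg_s/f)^2 - 1$ satisfies $\psi\big|_{\mc S^2_\pm} = (g g_{ss})^2 - 1$, and the already-established bound $-1 \leq \psi \leq 0$ gives $|g g_{ss}(s_\pm)| \leq 1$ for as long as the flow exists. Substituting the worst signs into the previous display yields
\[
\frac{d}{dt}\big[g^2(s_+,t) - 3 g^2(s_-,t)\big]
\;=\; 4\,(gg_{ss})(s_+) - 12\,(gg_{ss})(s_-) + 16
\;\geq\; 4(-1) - 12(+1) + 16 \;=\; 0,
\]
so the quantity is monotone non-decreasing in time and therefore remains at least its initial value $\delta^2$, given by Closeness Assumption~(d).

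No substantive obstacle is anticipated: the computation is routine once one recognizes that the required bound $|g g_{ss}(s_\pm)| \leq 1$ is delivered for free by Lemma~\ref{NearCalabi}. As a sanity check, in the K\"ahler case $f = g g_s$ one has $gg_{ss}(s_\pm) = f_s(s_\pm) = \pm 1$, which saturates both inequalities above and gives exact conservation $\tfrac{d}{dt}[g^2(s_+) - 3 g^2(s_-)] \equiv 0$ --- consistent with the corresponding cohomological invariant under K\"ahler--Ricci flow on $\mb{CP}^2\#\,\overline{\mb{CP}}^{\,2}$. The essential geometric content of the proof is that the one-sided K\"ahler pinching $\psi \leq 0$ is precisely the right sign condition to ensure that non-K\"ahler corrections to the K\"ahler-class conservation push the two polar areas in the favorable direction at both poles simultaneously.
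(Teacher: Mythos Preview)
Your proof is correct and follows essentially the same approach as the paper: compute $\frac{d}{dt}g^2(s_\pm,t)=4gg_{ss}(s_\pm)-8$ via l'H\^opital at the poles, use Lemma~\ref{NearCalabi} to bound $|gg_{ss}(s_\pm)|\leq1$, and conclude monotonicity. The paper presents the two endpoint estimates separately (obtaining $\frac{d}{dt}g^2(s_+)\geq-12$ and $\frac{d}{dt}g^2(s_-)\leq-4$) before combining, but the content is identical, and your K\"ahler sanity check is a nice addition.
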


\begin{proof}
We recall that
\[g_t = g_{ss} + \left(\frac{f_s}{f} + \frac{g_s}{g}\right)\, g_s + 2\, \frac{(f^2 - 2g^2)}{g^3}.\]
Using l'H\^opital's rule, we compute at $s_+$ that
\[\lim_{s\to s_+} \frac{f_s g_s}{f} = g_{ss}(s_+,t).\]
Because $g_s(s_+,t) = 0$, we have 
\[\frac{\mr d}{\mr dt} g(s_+,t) = 2g_{ss}(s_+,t) - \frac{4}{g(s_+,t)}.\]
 Lemma \ref{NearCalabi} tells us that $g |g_s|\leq f$, which as a consequence of  l'H\^opital's rule, implies at $s_+$ that
\[g g_{ss} \ge -1.\]
It follows that
\begin{equation}
\label{eq-s+}
\frac{\mr d}{\mr dt} g^2(s_+,t) \ge -12.
\end{equation}

Similarly, using the fact that $g |g_s|\leq f$ and using l'H\^opital's rule, we have  that $g g_{ss} \le 1$ at $s_-$,  from which we obtain
\begin{equation}
\label{eq-s-}
\frac{\mr d}{\mr dt} g^2(s_-,t) \le -4.
\end{equation}
Estimates \eqref{eq-s+} and \eqref{eq-s-} together imply that
\[\frac{\mr d}{\mr dt} \big(g^2(s_+,t) - 3g^2(s_-,t)\big) \ge 0,\]
which yields
\[g^2(s_+,t) - 3g^2(s_-,t) \ge g^2(s_+,0) - 3g^2(s_-,0).\]
\end{proof}

Our second result in this section proves that solutions originating from initial data satisfying our Closeness
Assumptions become singular at $T<\infty$ only if $g$ vanishes somewhere.

\begin{lemma}
\label{lem-balancing}
If a solution originating from initial data satisfying our Closeness Assumptions becomes singular at time $T$,
then $\mu(T) = 0$.
\end{lemma}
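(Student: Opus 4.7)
I argue by contradiction: suppose the flow is singular at $T<\infty$ but $\mu(T)>0$. Choose $t_0<T$ so that $\mu(t)\ge\mu(T)/2$ on $[t_0,T)$. By Lemmas \ref{lem-curv-1-der}, \ref{lem-k02}, and \ref{lem-k01}(i), every sectional curvature is uniformly bounded on $[t_0,T)$ except possibly the upper bound on $\kappa_{01}$. I then invoke the dichotomy of Lemma \ref{lem-k01}(ii). In the first branch, $\kappa_{01}\le C/\mu^2\le 4C/\mu(T)^2$, so $|\Rm|$ is bounded on $[t_0,T)$; Hamilton's extension criterion then extends the flow past $T$, contradicting the singularity.

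It remains to rule out a Type-I singularity when $\mu(T)>0$. Here I combine the blow-up analysis from the proof of Lemma \ref{lem-k01}(ii) with a preserved area bound. Integrating the K\"ahler pinching $g|g_s|\le f$ (from Lemma \ref{NearCalabi}) and using Lemma \ref{big-end}, I get, for all $t\in[0,T)$,
\begin{equation*}
\int_{s_-}^{s_+} f(\cdot,t)\,ds \;\ge\; \tfrac12\bigl(g^2(s_+,t)-g^2(s_-,t)\bigr) \;\ge\; \tfrac{\delta^2}{2}.
\end{equation*}
The $\mathrm{U}(2)$-invariant $2$-surface $\Sigma_i$ through a Type-I blow-up point $p_i$, realized as the integral leaf of $\mathrm{span}\{\partial_s, X_1\}$ through $p_i$ (with $X_1$ the Killing vector dual to $\omega^1$), is a smooth topological $\mathcal{S}^2$ with intrinsic metric $ds^2 + f^2\,d\theta^2$ and hence area $2\pi\int f\,ds \ge \pi\delta^2$. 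Under the Type-I rescaling $G_i := K_i G(\cdot,t_i+K_i^{-1}t)$ with $K_i:=\kappa_{01}(p_i,t_i)\to\infty$, the rescaled area of $\Sigma_i$ is bounded below by $K_i\pi\delta^2\to\infty$.

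On the other hand, the blow-up analysis of Lemma \ref{lem-k01}(ii) produces a pointed Cheeger--Gromov--Hamilton limit whose universal cover splits as $\mathbb{R}^2\times\mathcal{S}^2$, where the $\mathcal{S}^2$ factor has constant Gauss curvature $1$ and finite area $4\pi$. Exploiting the $\mathrm{U}(2)$-invariance of the rescaled metrics, which is preserved under the pointed convergence, I identify this compact $\mathcal{S}^2$ factor with the limit of the rescaled surfaces $\Sigma_i$. Their rescaled areas should therefore converge to $4\pi$, contradicting the divergence $K_i\pi\delta^2\to\infty$ and completing the proof.

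The main obstacle will be making this last identification precise: I must argue that the limit of the rescaled $\mathrm{U}(2)$-invariant integral surfaces $\Sigma_i$ is exactly the compact $\mathcal{S}^2$ factor of the Hamilton splitting, and that area passes to the smooth Cheeger--Gromov--Hamilton limit. The identification should follow from the fact that $\Sigma_i$ is characterized by the preserved $\mathrm{U}(1)\subset\mathrm{U}(2)$ symmetry together with the distribution spanned by the unit normal and the Killing field $X_1$, both of which persist under the pointed convergence of the highly symmetric metrics $G_i$.
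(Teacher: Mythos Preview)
Your overall strategy matches the paper's: invoke the dichotomy of Lemma~\ref{lem-k01}(ii), dispose of the two-sided bound branch via a curvature bound, and in the Type-I branch combine the K\"ahler pinching $g|g_s|\le f$ with Lemma~\ref{big-end} to reach a contradiction. There is one minor omission and one more substantive issue.

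The minor point: your opening move ``choose $t_0<T$ so that $\mu(t)\ge\mu(T)/2$ on $[t_0,T)$'' is not automatic, since assuming $\lim_{t\to T}\mu(t)=0$ fails does not by itself prevent $\mu$ from dipping arbitrarily low along a sequence. The paper fills this with the differential inequality $\frac{d}{dt}\mu^2\ge -8$ (from the evolution~\eqref{g-evolution}), which rules out such oscillation and gives a uniform $\mu\ge\eta>0$ on $[0,T)$.

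In the Type-I branch your argument diverges from the paper's. You integrate $gg_s\le f$ to obtain the uniform lower bound $\int_{s_-}^{s_+} f\,ds\ge\delta^2/2$, interpret it as an area bound on the totally geodesic spheres $\Sigma_i$, and seek a contradiction with the finite area of the compact $\mathcal S^2$ factor in the blow-up limit. The paper instead extracts from the compact $\mathcal S^2$ factor that $\sup_s f(\cdot,t_i)\le C\sqrt{T-t_i}$, then integrates $g|g_s|\le f$ in the opposite direction to get $g^2(s_+)-g^2(s_-)\le C\sqrt{T-t_i}\,(s_+-s_-)$, and finally bounds the diameter $s_+-s_-$ using $\mu\ge\eta$ and the curvature estimates. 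Your route is cleaner in that it bypasses the diameter bound entirely. However, the step you flag as the ``main obstacle''---passing the \emph{global} area of $\Sigma_i$ to the limit under merely \emph{pointed} Cheeger--Gromov convergence---is a genuine gap as written: pointed convergence alone does not control global quantities. The fix is to observe that the $\Sigma_i$ are totally geodesic (from the $\mr U(2)$-symmetry, or by direct computation that $\nabla_{\partial_s}\partial_s$, $\nabla_{\partial_s}X_1$, $\nabla_{X_1}X_1$ have no $X_2,X_3$ components), so they converge as surfaces; and that a sequence of closed connected surfaces whose pointed limit is \emph{compact} must eventually converge globally, because the Cheeger--Gromov diffeomorphism carries the compact limit onto an open-and-closed subset of each $\Sigma_i$. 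With that in hand, areas converge and your contradiction goes through. The paper's claim $\sup_s f\le C\sqrt{T-t_i}$ uses the same compact-limit mechanism, just phrased in terms of the profile function rather than area.
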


\begin{proof}
Lemma~\ref{lem-k01} proves that either there is a two-sided curvature bound for $\kappa_{01}$
or the singularity is Type-I.

If there is a two-sided bound $|\kappa_{01}| \le C/\mu^2$, then combining this with 
estimate~\eqref{eq-help-111} we obtain a uniform constant $C$ such that
\[
|\Rc(G(t))| \le \frac{C}{\mu^2},
\]
for as long as the flow exists. Because \cite{Ses05} proves that $\limsup_{t\nearrow T}|\Rc|=\infty$
if $T<\infty$ is the singularity time, it follows that $\mu(T) = 0$.

To complete the proof, we may assume, to obtain a contradiction, that a solution encounters a finite-time Type-I
singularity for which $\lim_{t\to T} \mu(t) = 0$ is false.

We first claim that this assumption implies that there exists $\eta> 0$ such that $\mu(t) \ge \eta > 0$ for $t \in [0,T)$.
We prove this claim by contradiction.
Observe that the maximum principle implies that 
\[\frac{\mr d}{\mr dt} \mu(t) \ge -\frac{4}{\mu(t)}.\]
So for  $t \ge \tau$ in $[0,T)$, one has
\begin{equation}	\label{mu-estimate}
\mu(t)^2 \ge \mu(\tau)^2 - 8(t - \tau).
\end{equation}
If it is not true that $\lim_{t\to T} \mu(t) = 0$, then there exists a sequence $\tau_i \to T$ along which $\mu(\tau_i) \ge \eta > 0$ for all $i$.
On the other hand, if there exists another  sequence $t_i\to T$ along which $\lim_{i\to \infty} \mu(t_i) = 0$, then by passing to
subsequences, we may assume that $t_i \ge \tau_i$, and hence that
\[\mu(t_i)^2 \ge \mu(\tau_i)^2 - 8(t_i - \tau_i) \ge \eta^2 - 8(t_i - \tau_i).\]
But this is impossible, because $\lim_{i\to \infty}\mu(t_i) = 0$ and $\lim_{i\to\infty} (t_i - \tau_i) = 0$.
This contradiction proves the claim.

The proof of Lemma~\ref{lem-k01} tells us that the inequality $\mu(t) \ge \eta > 0$ implies that the universal cover of
any Type-I singularity model must be $\mc S^2\times\mathbb{R}^2$. Compactness of the $\mc S^2$
factor implies there is a sequence $t_i\to T$ along which $\sup_{s_-\leq s\leq s_+}f(s,t_i)\leq C\sqrt{T-t_i}$.
On the other hand it follows from Lemma \ref{NearCalabi} that 
\[g |g_s|\le f \le C\sqrt{T - t_i}\]
at those times, which implies that
\begin{equation}
\label{eq-est111}
g^2(s_+,t_i) - g^2(s_-,t_i) \le C\sqrt{T - t_i} (s_+ - s_-).
\end{equation}
We recall that
\[\frac{\mr d}{\mr dt}(s - s_-) = \int_{s_-}^s \left(\frac{f_{ss}}{f} + 2\frac{g_{ss}}{g}\right)\,\mr ds
= -\int_{s_-}^s (\kappa_{01} + 2\kappa_{02})\,\mr ds.\]
Combining  Lemma \ref{lem-k02} and part \textsc{(i)} of Lemma \ref{lem-k01},  we obtain
\[\frac{\mr d}{\mr dt} (s - s_-) \le \frac{C(s - s_-)}{\mu(t)^2} \le C^\prime(s - s_-),\] 
because $\mu(t) \ge \eta > 0$. Integrating this over $[0,T)$ yields a constant $C^{\prime\prime}$ such that
\begin{equation}
\label{eq-diam}
|s - s_-| \le C^{\prime\prime} \qquad\mbox{ for all }\quad t\in [0,T) \quad \mbox{ and }\quad s\in [s_-,s_+].
\end{equation}
Combining \eqref{eq-est111} and \eqref{eq-diam} then gives us
\[
\label{eq-contr1}
g^2(s_+,t_i) - g^2(s_-,t_i) \le C\, \sqrt{T - t_i}.
\]
But this is incompatible with the conclusion of Lemma \ref{big-end} that
\[g^2(s_+,t_i) - 3g^2(s_-,t_i) \ge \delta^2>0.\]
This contradiction proves the result.
\end{proof}

For K\"ahler solutions, monotonicity of $g$ is preserved automatically for as long as the metric remains smooth.
We do not know if this is true for the non-K\"ahler solutions studied here. However, it
follows from the evolution equation for $g_s$,
\[
 (g_s)_t = \Delta(g_s)-2\frac{g_s}{g}(g_s)_s
 +\left\{\frac{4}{g^2} - \frac{g_s^2}{g^2} - \frac{f_s^2}{f^2} - 6\frac{f^2}{g^4}\right\} g_s
 + 4\frac{f}{g^3} f_s,
 \]
 that monotonicity can fail only where $f_s<0$, \emph{i.e.,} only in a proper neighborhood of $\mc S^2_+$.
 In our construction of initial data in Section~\ref{sec-initial}, we are free to choose the parameter
 $\alpha^2=|\mc S^2_-|$ as small as possible, and the parameter $A$, which controls the size of
$\mc S^2_+$ up to an $\ve$ error, as large as possible.  Moreover, estimate~\eqref{eq-s-} shows that
\[
\frac{\mr d}{\mr dt} g^2(s_-,t) \le -4,
\]
while at an interior minimum $s_{\mr{neck}}$ of $g$, it is easy to see that
\[
(g^2)_t\big|_{s=s_{\mr{neck}}}\geq-8.
\]
This line of reasoning strongly suggests that  it should be possible to construct an open set of initial data for
which $g^2$ vanishes at $s_-$ before it can vanish at an interior point. What keeps this formal argument
from being a rigorous proof is that in order to obtain a uniform lower bound for $g$ in the neighborhood where a local
minimum can form, one needs a uniform bound from below on the distance between $\mc S^2_-$ and the first
critical point of $f$. However, it is notoriously difficult to control the location of a critical point of a solution of a parabolic
\textsc{pde}. Nevertheless, we believe the following to be true:

\begin{conjecture}	\label{SouthPole}
For appropriate choices of $\alpha\ll A$, a solution originating from initial data satisfying our Closeness Assumptions
satisfies
\[
\mu(t)=g(s_-,t)
\]
for as long as it exists.
\end{conjecture}

We now proceed under the \textbf{assumption} that Conjecture~\ref{SouthPole} is true.
If so, then recalling Lemma~\ref{lem-balancing}, ones sees that solutions originating from initial data satisfying
our Closeness Assumptions become singular only by crushing the fiber $\mc S^2_-$. We state this as follows:

\begin{corollary}	\label{crush}
If a solution originating from initial data satisfying our Closeness Assumptions becomes singular at time $T$,
then $g(s_-,T) = 0$.
\end{corollary}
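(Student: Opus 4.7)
The plan is to observe that Corollary \ref{crush} follows essentially by combining the two preceding results, Lemma \ref{lem-balancing} and Lemma \ref{SouthPole}, with no further analytic input required. I would present the argument as a short two-line deduction, pointing out that the Closeness Assumptions enter only through their role in those two lemmas.

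Concretely, first I would invoke Lemma \ref{lem-balancing}, which asserts that for any solution originating from initial data satisfying the Closeness Assumptions, a finite-time singularity at $T$ forces $\mu(T)=0$, where $\mu(t)=\min_{\mc S^2\tilde\times\mc S^2}g(\cdot,t)$ as defined in \eqref{define-lambda}. Second, I would apply Lemma \ref{SouthPole}, which identifies the location of this spatial minimum on $\mc S^2_-$ for every $t\in[0,T]$, yielding the pointwise identity $\mu(t)=g(s_-,t)$. Evaluating this at $t=T$ and combining with $\mu(T)=0$ gives $g(s_-,T)=0$, which is the conclusion.

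There is no real obstacle; the only minor subtlety worth flagging is that Lemma \ref{SouthPole} is stated over the closed interval $[0,T]$ (via a limiting argument using the lower barrier $h-2ct$), so the identification $\mu(T)=g(s_-,T)$ is legitimate at the singular time itself and not merely as a limit from $t\nearrow T$. Thus the Corollary requires only a sentence of assembly, and its geometric content, namely that among the a priori possible locations of singularity formation along the cohomogeneity-one interval $[s_-,s_+]$ only the distinguished fiber $\mc S^2_-$ can actually pinch, is entirely encoded in the preceding two lemmas.
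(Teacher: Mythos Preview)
Your proposal is correct and matches the paper's own approach exactly: the paper states Corollary~\ref{crush} immediately after Lemma~\ref{SouthPole} with the remark that it follows by combining that lemma with Lemma~\ref{lem-balancing}, and gives no further argument. Your observation about the closed interval $[0,T]$ in Lemma~\ref{SouthPole} is apt and is precisely what makes the evaluation at $t=T$ legitimate.
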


\begin{corollary}	\label{TypeOne}
All solutions originating from initial data satisfying our Closeness Assumptions develop finite-time Type-I singularities.
\end{corollary}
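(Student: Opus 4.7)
The plan is to first establish a finite singular time, then invoke the Type-I dichotomy in Lemma~\ref{lem-k01}\textsc{(ii)}, and finally convert the resulting spatial curvature bound into a Type-I temporal bound using Lemma~\ref{SouthPole} together with the linear decay \eqref{eq-s-}.

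For the finite-time assertion, I would note that the proof of Lemma~\ref{big-end} already establishes $\frac{d}{dt}g^2(s_-,t)\le -4$, so
\[
g^2(s_-,t)\le g^2(s_-,0)-4t.
\]
Because the metric requires $g>0$, the smooth Ricci flow cannot persist beyond $t=g^2(s_-,0)/4$, and hence there is a finite singular time $T<\infty$. By Corollary~\ref{crush}, $g(s_-,T)=0$.

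Next I would invoke Lemma~\ref{lem-k01}\textsc{(ii)}. If its alternative \eqref{eq-k01-above} fails, that Lemma already declares the singularity to be Type-I and no further work is needed. In the remaining case $\kappa_{01}\le C/\mu(t)^2$, I combine this with the first-order curvature estimates of Lemma~\ref{lem-curv-1-der}, the bound $|\kappa_{02}|=|\kappa_{03}|\le C/g^2$ of Lemma~\ref{lem-k02}, and the lower bound $\kappa_{01}\ge -C/g^2$ of Lemma~\ref{lem-k01}\textsc{(i)}. Using Lemma~\ref{SouthPole}, which forces $g\ge\mu$ pointwise, these bounds collapse to the uniform estimate
\[
|\Rm(G(t))|\le \frac{C}{\mu(t)^2}\qquad(0\le t<T).
\]

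The final step converts this spatial bound into a temporal one. Lemma~\ref{SouthPole} gives $\mu(t)=g(s_-,t)$, and integrating \eqref{eq-s-} from $t$ to $T$ with the endpoint value $g^2(s_-,T)=0$ yields the matching linear lower bound
\[
\mu(t)^2=g^2(s_-,t)\ge 4(T-t).
\]
Hence $|\Rm(G(t))|\le C/[4(T-t)]$, which is the Type-I scaling. The only step that might be called an obstacle is this conversion from spatial to temporal decay: it works cleanly only because Lemma~\ref{SouthPole} pins the spatial minimum of $g$ to $\mc S^2_-$, where the ODE computation of Lemma~\ref{big-end} supplies both the upper bound $\frac{d}{dt}g^2(s_-,t)\le -4$ that forces finite-time singularity and the matching lower bound $g^2(s_-,t)\ge 4(T-t)$ needed for Type-I.
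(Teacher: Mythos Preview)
Your argument is correct and follows essentially the same route as the paper: establish $T<\infty$ from the differential inequality $\frac{d}{dt}g^2(s_-,t)\le -4$, reduce via Lemma~\ref{lem-k01}\textsc{(ii)} to the case of a two-sided bound $|\kappa_{01}|\le C/\mu^2$, combine with \eqref{eq-help-111} to get a full curvature bound of order $\mu^{-2}$, and then integrate the same differential inequality from $t$ to $T$ (using $\mu(T)=0$) to convert $\mu^{-2}$ into $(T-t)^{-1}$. The paper compresses your last three paragraphs into a single sentence, but the content is identical; the one cosmetic difference is that you bound $|\Rm|$ while the paper writes $|\Rc|$, and your remark that ``$g\ge\mu$'' follows from Lemma~\ref{SouthPole} is unnecessary since that inequality is just the definition of $\mu$.
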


\begin{proof}
Assuming Conjecture~\ref{SouthPole}, it follows from estimate~\eqref{eq-s-} that
\begin{equation}	\label{crush-est}
\frac{\mr d}{\mr dt}\big(\mu^2(t)\big)\leq-4.
\end{equation}
So a finite-time singularity is inevitable.
As a consequence of  Lemma~\ref{lem-k01}, to prove that the singularity  is Type-I, we may assume there is a two-sided curvature bound for $\kappa_{01}$.
Such a bound, together with estimate~\eqref{eq-help-111}, gives a uniform constant $C$ such that
$|\Rc(G(t))| \leq C\mu^{-2}(t)$ for as long as the flow exists. But then the result follows easily from estimate~\eqref{crush-est}.
\end{proof}

\section{Convergence to the blowdown soliton}

Corollaries~\ref{crush} and \ref{TypeOne} tell us that any point $p\in\mc S^2_-$ is a \emph{special Type-I singular point} in the sense
of Enders--M\"uller--Topping~\cite{EMT}. It follows from that work that  every blow-up sequence $\big(\mc S^2\tilde\times\mc S^2,G_k(t),p\big)$
subconverges to a smooth nontrivial gradient shrinking soliton $\big(\mc M, G_\infty(\tau)\big)$ defined for $-\infty<\tau<0$.
Using  Lemma~\ref{big-end}, we determine that the limit is noncompact.
So $\mc M$ is diffeomorphic to $\mb C^2$ blown up at the origin; that is, $\mc O(-1)$.
Moreover, the symmetries of $G(t)$ are preserved in the limit, so the metric retains  the form exhibited  in~\eqref{metric-standard}:
\begin{equation}	\label{limit}
G_\infty = \ds\ten\ds+\Big\{f^2\,\omega^1\ten\omega^1
 	   + g^2\big(\omega^2\ten\omega^2
	   + \omega^3\ten\omega^3\big)\Big\}.
\end{equation}
Here and in the remainder of this section, we abuse notation by using $s$ to represent arclength
from $\mc S^2_-$ in the limit soliton, and using $f$ and $g$ for the other components of the limit soliton metric.

The quantity $\psi$ that we estimate in Lemma~\ref{NearCalabi} is scale-invariant, so the limit soliton satisfies
\begin{equation}	\label{almostKahler}
-1\leq\frac{gg_s}{f}\leq1,
\end{equation}
which implies that the limit is ``not too far'' from K\"ahler in a precise sense. It is a general principle that shrinking solitons
appear in discrete rather than continuous families, modulo scaling and isometry. So it is reasonable to expect that there are no other
cohomogeneity-one shrinking solitons in the neighborhood of such metrics satisfying estimate~\eqref{almostKahler}.
(For a related rigidity result, see work~\cite{Kot17} of Kotschwar.) To obtain this result, however,
we require another assumption.

We now introduce that second conjecture and then present the formal argument that motivates us to
believe it is true:

\begin{conjecture} 	\label{Kahler}
If $\big(\mc M, G_\infty(\tau)\big)$ is a smooth gradient shrinking soliton 
having the form~\eqref{limit} obtained as a limit of parabolic rescalings of a solution 
originating from initial data that satisfy our  Closeness Assumptions for suitable $\alpha\ll A$,
then
\[
gg_{ss}\big|_{\mc S^2_-}=1.
\]
\end{conjecture}

Since by l'H\^opital's rule, $gg_{ss}\big|_{\mc S^2_-}=\lim_{s\searrow s_-} (gg_s/f)$, we call this an
``infinitesimal K\"ahler condition''.
Our formal argument that it should hold on the limit soliton is based on a parabolic rescaling of the original solution in a
neighborhood of the developing singularity on $\mc S^2_-$. For clarity in the argument,  we write
$\zeta_t\big|_\xi$ to indicate that we are taking the time
derivative of a smooth space-time function $\zeta$ with a spatial variable $\xi$ held fixed. 
All time derivatives computed thus far have been derived from~\eqref{RF}, in which $x$ is held fixed.

Using \eqref{dsdr} along with equations~\eqref{u-evolution} and \eqref{v-evolution}, one
computes that the evolution equations for $u$ and $v$ with $x$ held fixed may be written
with respect to the $\varrho$ variable as
\begin{subequations}		\label{FirstStep}
\begin{align}
\frac14\,u_t\big|_x&=\frac{u_{\varrho\varrho}}{u}-\frac{u_\varrho^2}{u^2}+\frac{u_\varrho v_\varrho}{uv}-\frac{u^2}{v^2},\\
\frac14\,v_t\big|_x&=\frac{v_{\varrho\varrho}}{u}+\frac{u}{v}-2,		\label{KRF-v}
\end{align}
\end{subequations}
respectively. Motivated by Corollary~\ref{TypeOne} and the geometry of the blowdown soliton, we introduce
new time and space variables,
\[
\tau:=-\log\big\{4(T-t)\big\}\qquad\mbox{ and }\qquad\sigma:=\sqrt2\,\tau+\varrho,
\]
where $T<\infty$ is the singularity time. We then define rescaled metric components,
\[
U(\sigma,\tau):=e^\tau u(s,t)\qquad\mbox{ and }\qquad V(\sigma,\tau):=e^\tau v(s,t),
\]
noting that a solution is K\"ahler if and only if $U=V_\sigma$.
We observe that equation~\eqref{rho-evolution} implies that
\[
\sigma_\tau=\sqrt2+\mc J,
\]
where $\mc J:=\varrho_\tau=e^{-\tau}\varrho_t/4$. 
To compute the nonlocal, nonlinear term $\mc J$, we note that 
\[
f_s =\frac{U_{\sigma}}{U}, \qquad g_s = \frac{V_{\sigma}}{\sqrt{UV}},
\]
and
\[
g_{ss} = e^{\tau/2}\left\{\frac{2V_{\sigma\sigma}}{UV^{1/2}}
-\frac{V_{\sigma}\,(UV)_\sigma}{U^2 V^{3/2}}\right\}.
\]
Applying these transformations to formula~\eqref{rho-evolution} shows that
in these coordinates, the nonlocal term is given by
\begin{equation}	\label{J-in-sigma}
\mc J = \int_{\sqrt{2}\tau}^{\sigma}
\left\{\frac{V_{\bar\sigma\bar\sigma}}{U V} - \frac{U_{\bar\sigma}\, V_{\bar\sigma}}{U^2 V}
-\frac12\frac{V_{\bar\sigma}^2}{U V^2}+\frac12 \frac{U}{V^2}\right\}\,\mr d\bar{\sigma}.
\end{equation}
The conversion from time derivatives with $x$ held fixed to  time derivatives with $\sigma$ held fixed
is given by
\begin{align*}
U_\tau\big|_\sigma+\big(\sqrt2+\mc J\big)U_\sigma-U&=\frac14u_t\big|_x,\\
V_\tau\big|_\sigma+\big(\sqrt2+\mc J\big)V_\sigma-V&=\frac14v_t\big|_x.
\end{align*}
Thus by using~\eqref{FirstStep}, we obtain the evolution equations
\begin{subequations}
\begin{align}
U_\tau\big|_\sigma&=\frac{U_{\sigma\sigma}}{U}-\big(\sqrt2+\mc J\big)U_\sigma
-\frac{U_\sigma^2}{U^2}+\frac{U_\sigma V_\sigma}{UV}-\frac{U^2}{V^2}+U,
\label{U-evolution}\\
\notag\\
V_\tau\big|_\sigma&=\frac{V_{\sigma\sigma}}{U}-\big(\sqrt2+\mc J\big)V_\sigma
+\frac{U}{V}+V-2.	\label{V-evolution}
\end{align}
\end{subequations}

\begin{remark}
By comparing equations~\eqref{BlowdownEvolution} and \eqref{V-evolution}, one finds that if $\Phi$
is the rescaling of the blowdown soliton $\vp$, then $V=\Phi$ evolves by $V_\tau\big|_\sigma=-\mc J\,V_\sigma$.
But by Lemma~\ref{rho-evolution-proof}, $\mc J$ vanishes on any K\"ahler solution. Hence $V=\Phi$
becomes a stationary solution in these coordinates.
\end{remark}

Motivated by the quantity $\psi$ introduced in~\eqref{KahlerQuantity}, we now define
\[
\Omega:=\frac{V_\sigma}{U}.
\]
Then differentiating equations~\eqref{J-in-sigma} and \eqref{V-evolution},
recalling \eqref{U-evolution}, and arranging terms, one computes that $\Omega$ evolves by
\begin{equation}	\label{W-evolution}
\Omega_\tau\big|_\sigma
=\frac{\Omega_{\sigma\sigma}}{U}
+\left\{\frac{U_\sigma}{U^2}-\frac{\Omega}{V}-\sqrt2-\mc J\right\}\Omega_\sigma
+ (1 - \Omega^2)\, \left( \frac{U_{\sigma}}{U V} - \frac12\frac{U \Omega}{V^2}\right).
\end{equation}
We note that $\Omega\equiv1$ is a stationary solution, which reflects the fact that
the K\"ahler condition is preserved under Riemannian Ricci flow.

To linearize, we define $\omega:=\Omega-1$ and compute that
\begin{equation}	\label{omega-evolution}
\omega_\tau\big|_\sigma= \frac{\omega_{\sigma\sigma}}{U}
+\left\{\frac{U_\sigma}{U^2}-\frac{1}{V}-\sqrt2\right\}\omega_\sigma
+\left\{\frac{U}{V^2}-2\frac{U_\sigma}{UV}\right\}\omega+Q[\omega],
\end{equation}
where the nonlinear terms on the \textsc{rhs} are given by
\[
Q[\omega]=-\left\{\frac{\omega}{V}+\mc J\right\}\omega_\sigma
+\left\{\frac12\frac{U\left(3+\omega\right)}{V^2}-\frac{U_\sigma}{UV}\right\}\omega^2.
\]
Here we use the fact that
\[
\mc J=\int_{\sqrt2\tau}^\sigma
\left\{\frac{\omega_{\bar\sigma}}{UV}-\frac{U(\omega+\omega^2/2)}{V^2}\right\}
\mr d\bar\sigma.
\]

If $\omega$ is small, we are close to a K\"ahler solution. It then follows
from~\eqref{ClosingCondition} that $V=1+a_1e^\sigma+a_2e^{2\sigma}+\cdots$ and
$U=a_1e^\sigma+2a_2e^{2\sigma}+\cdots$.
Thus as $\sigma\searrow-\infty$, \emph{i.e.,} in a neighborhood of $\mc S^2_-$, the factor multiplying
$\omega$ in the linear reaction term of equation~\eqref{omega-evolution} satisfies
\[
\frac{U}{V^2}-2\frac{U_\sigma}{UV}\approx-2,
\]
which leads us to expect ``asymptotic approach to K\"ahler'' in that neighborhood, and thus motivates us to make Conjecture~\ref{Kahler}.
\smallskip

\textbf{Assuming} Conjecture~\ref{SouthPole} and Conjecture~\ref{Kahler} are true, we now prove the following:

\begin{lemma}
Any smooth gradient shrinking soliton $\big(\mc M, G_\infty(\tau)\big)$
having the form~\eqref{limit} obtained as a limit of parabolic rescalings at $\mc S^2_-$ is K\"ahler.
\end{lemma}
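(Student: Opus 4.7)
The strategy is a strong maximum principle argument for the scale-invariant quantity $\psi := (gg_s/f)^2 - 1$. By Lemma~\ref{CalabiCondition}, $\psi\equiv 0$ if and only if the metric is K\"ahler, and the hypothesis~\eqref{almostKahler} says $\psi\leq 0$ on $\mc M$; it thus suffices to promote this inequality to equality.

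Because $(\mc M,G_\infty)$ is a cohomogeneity-one gradient shrinker, the potential is of the form $F=F(s)$, so $\nabla F = F_s\,\Ds$. The self-similarity of the soliton, together with scale-invariance of $\psi$, forces $\psi_t = F_s\,\psi_s$ pointwise on the flow. Substituting into the evolution equation~\eqref{psi-evolution} converts it into the time-independent (nonlinear) elliptic ODE
\[
\psi_{ss} + \Big(3\frac{f_s}{f} - 2\frac{g_s}{g} - F_s\Big)\psi_s - \frac{\psi_s^2}{2(\psi+1)} + 4\frac{g_s}{g}\Big(\frac{g_s}{g} - 2\frac{f_s}{f}\Big)\psi = 0,
\]
whose coefficients extend smoothly across the exceptional fiber $\mc S^2_-$ by l'H\^opital's rule, exactly as in the proof of Lemma~\ref{NearCalabi}.

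I would next analyze $\psi$ at the asymptotic end of $\mc M\cong\mc L^2_{-1}$. Standard results on $\mr U(2)$-invariant shrinking solitons on $\mc O(-1)$ (cf.~\cite{FIK03}) show that such shrinkers are asymptotically conical, with $f$ and $g$ growing linearly in $s$ along the end; combining the resulting asymptotic form of $gg_s/f$ with the pointwise bound $|gg_s/f|\leq 1$ pins $gg_s/f\to 1$ at infinity, and hence $\psi\to 0$ there. Consequently $\sup_{\mc M}\psi = 0$, attained in the limit at the conical end.

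The final step is to deduce $\psi\equiv 0$ via a strong maximum principle. The chief obstacle is the indefinite sign of the coefficient $4(g_s/g)(g_s/g-2f_s/f)$ multiplying $\psi$, which prevents a textbook max-principle bound on the nonpositive function $\psi$. I would handle this via an integrating-factor substitution $\tilde\psi := \psi/h$, where $h>0$ solves the associated homogeneous linearized equation (whose existence on the interval away from $\mc S^2_-$ follows from Sturm theory); the resulting ODE for $\tilde\psi$ has no zeroth-order term, and the Hopf boundary lemma combined with the asymptotic vanishing at infinity then forces $\tilde\psi\equiv 0$. Real-analyticity of the shrinker together with connectedness of $\mc M$ propagates the identity $\psi\equiv 0$ to all of $\mc M$, so $G_\infty$ is K\"ahler.
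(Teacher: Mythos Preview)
Your approach differs substantially from the paper's and contains a genuine gap.

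The paper does not use asymptotic information at the end of $\mc M$ at all. Instead it works entirely at the exceptional fiber $s=0$: setting $F:=f-gg_s$, the hypothesis~\eqref{almostKahler} gives $F(0)=0$ as a global minimum, hence also $F_s(0)=0$. Using the soliton system~\eqref{soliton}, the paper derives a second-order linear \emph{homogeneous} \textsc{ode} for $F$ with a regular singular point at $s=0$ whose indicial roots are $\pm i$. The Frobenius solutions therefore oscillate like $\cos(\log s)$ and $\sin(\log s)$ near $s=0$ and cannot be smooth there, so the only smooth solution is $F\equiv0$.

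The decisive gap in your argument is Step~2. Your claim that $\psi\to 0$ at infinity rests on ``standard results on $\mr U(2)$-invariant shrinking solitons on $\mc O(-1)$ (cf.~\cite{FIK03}),'' but the relevant results in~\cite{FIK03} concern \emph{K\"ahler} solitons --- exactly what you are trying to establish --- so the appeal is circular. Even if one grants asymptotic conicality from general structure theory for shrinkers, your deduction that $gg_s/f\to1$ does not follow: linear growth $f\sim as$, $g\sim bs$ gives $gg_s/f\to b^2/a$, and the pointwise bound $|gg_s/f|\leq1$ yields only $b^2/a\leq1$, not equality. Pinning down the cone angle would require a separate analysis of the soliton equations at infinity, which is absent. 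Without $\psi\to0$ at the end, the rest of the argument collapses: $\psi$ could simply tend to a negative constant along the end, and no interior maximum at value $0$ need exist. Your Step~3 is also incomplete --- Sturm theory does not furnish a \emph{global} positive solution $h$ on the noncompact interval $[0,\infty)$, and the nonlinear term $\psi_s^2/(2(\psi+1))$ does not disappear under the substitution $\tilde\psi=\psi/h$ --- but the fatal issue is Step~2.
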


\begin{proof}
We work at a fixed time $\tau<0$ and so suppress time below. However, we continue to use subscripts to indicate
spatial derivatives. We note here that smoothness requires that the closing conditions~\eqref{close-} hold at $s=0$,
a fact we use freely below.
\smallskip

We define
\[
F(s)=f-gg_s.
\]
We have $F(0)=0$ by smoothness of the metric, and $F_s(0)=0$ by Conjecture~\ref{Kahler},
because $F_s(0)=1-gg_{ss}$. We proceed to show that $F=0$ for all $s$. 

We  denote the soliton potential function by $\Gamma$  and we set $\gamma=\Gamma_s$.
Using equation~(51) from~\cite{IKS14}
to compute the Lie derivative, we find that the soliton equation
\begin{equation}
-\Rc[G_\infty]=\lambda G_\infty+\frac12\mc L_{\nabla \Gamma}G_\infty
\end{equation}
becomes the system
\begin{subequations}		\label{soliton}
\begin{align}
\gamma_s&=\frac{f_{ss}}{f}+2\frac{g_{ss}}{g}-\lambda,	\label{h1d} \\
\frac{f_{ss}}{f}&=\frac{f_s\gamma}{f}-2\frac{f_sg_s}{fg}+2\frac{f^2}{g^4}+\lambda,	\label{f2d} \\
\frac{g_{ss}}{g}&=\frac{g_s\gamma}{g}-\frac{f_sg_s}{fg}-\frac{g_s^2}{g^2}-2\frac{f^2}{g^4}+\frac{4}{g^2}+\lambda, \label{g2d}
\end{align}
\end{subequations}
where $\lambda<0$ depends on our choice of $\tau$ above.

Computing $F_s$ using equation~\eqref{g2d}, one finds that
\begin{align}
F_s&=f_s-g_s^2-gg_{ss}\notag \\
&=f_s-gg_s\gamma+\frac{gf_sg_s}{f}+2\frac{f^2}{g^2}-\lambda g^2-4\notag\\
&=\left(\gamma-\frac{f_s}{f}\right)F+2f_s-f\gamma+2\frac{f^2}{g^2}-\lambda g^2 -4. \label{F1d}
\end{align}
Hence
\[
F_{ss}=\left(\gamma-\frac{f_s}{f}\right)F_s+\left(\gamma-\frac{f_s}{f}\right)_sF+X,
\]
where we use~\eqref{f2d} to rewrite the final term above as
\begin{align*}
X&=2f_{ss}-f_s\gamma-f\gamma_s+4\left(\frac{ff_s}{g^2}-\frac{f^2g_s}{g^3}\right)-2\lambda gg_s\\
&=\left(4\frac{f^2}{g^4}+4\frac{f_s}{g^2}+2\lambda\right)F+\gamma^2\left(\frac{f}{\gamma}\right)_s.
\end{align*}
Therefore, $F$ satisfies the linear second-order (seemingly inhomogeneous) \textsc{ode}
\begin{equation}
F_{ss}-\left(\gamma-\frac{f_s}{f}\right)F_s
-\left\{\left(\gamma-\frac{f_s}{f}\right)_s+4\frac{f_s}{g^2}+4\frac{f^2}{g^4}+2\lambda\right\} F=
\gamma^2\left(\frac{f}{\gamma}\right)_s.	\label{LiH}
\end{equation}

We now show that the term on the \textsc{rhs} can be rewritten in terms of $F$ and $F_s$.
Using equations~\eqref{h1d}, \eqref{f2d}, and \eqref{g2d} in order, and then applying the identity $gg_s=f-F$, we obtain
\begin{align}
\frac12 \gamma^2\left(\frac{f}{\gamma}\right)_s&=\frac12(f_s\gamma-f\gamma_s)\notag\\
&=\frac12\left(f_s\gamma-f_{ss}-2\frac{fg_{ss}}{g}+\lambda f\right)\notag\\
&=\frac{f_sg_s}{g}-\frac{f^3}{g^4}-\frac{fg_{ss}}{g}\notag\\
&=2\frac{f_sg_s}{g}-\frac{fg_s\gamma}{g}+\frac{fg_s^2}{g^2}+\frac{f^3}{g^4}-4\frac{f}{g^2}-\lambda f\notag\\
&=-\left(2\frac{f_s}{g^2}+\frac{f^2}{g^4}+\frac{fg_s}{g^3}-\frac{f\gamma}{g^2}\right)F+Y,	\label{inhom}
\end{align}
where
\[
Y=2\frac{ff_s}{g^2}+2\frac{f^3}{g^4}-4\frac{f}{g^2}-\frac{f^2\gamma}{g^2}-\lambda f.
\]
Using equation~\eqref{F1d} to rewrite the first term on the \textsc{rhs}, it is easy to see that
\begin{equation}
Y=\frac{f}{g^2}F_s+\frac{f_s-f\gamma}{g^2}F.	\label{Ynice}
\end{equation}
So by using  equations~\eqref{inhom} and \eqref{Ynice}, we find that equation~\eqref{LiH} can be rewritten as
the linear second-order \emph{homogeneous} \textsc{ode}
\[
F_{ss}+\left(\frac{f_s}{f}-\gamma-2\frac{f}{g^2}\right)F_s
+\left\{\left(\frac{f_s}{f}-\gamma\right)_s+2\frac{fg_s}{g^3}-2\frac{f_s}{g^2}-2\frac{f^2}{g^4}-2\lambda\right\} F=0.
\]
Because $f_s/f\sim1/s$ and $(f_s/f)_s\sim-1/s^2$ as $s\searrow0$, this \textsc{ode} has a regular singular point at $s=0$. 
It is approximated in a neighborhood of $s=0$ by the equidimensional Euler equation
$s^2y''(s)-sy'(s)+y(s)=0$,
for which a fundamental set of solutions is $\{s,\,s\log(s))\}$.
It then follows from a theorem of Frobenius that a fundamental set of solutions of the exact equation has the form
\[
\sum_{n=0}^\infty a_ns^{n+1}\qquad\mbox{ and }\qquad \sum_{n=0}^\infty b_ns^n\log(s),
\]
where all coefficients except $a_0$ and $b_0$ are determined by recurrence relations. We conclude that
$F$ is identically zero for all $s\geq0$, hence that the soliton is K\"ahler.
\end{proof}
Theorem~1.5 of \cite{FIK03} tells us that the blowdown soliton is unique up to scaling and
isometry among $\mr U(2)$-invariant K\"ahler--Ricci solitons.
Hence this completes our presentation of evidence in favor of our Main Conjecture.


\begin{thebibliography}{ABC17}

\bibitem[BS09]{BS}
\textbf{Brendle, Simon; Schoen, Richard.}
Manifolds with 1/4-pinched curvature are space forms.
\emph{J.~Amer.~Math.~Soc.}~\textbf{22} (2009), no.~1, 287--307.

\bibitem[Cal82]{Calabi82}
\textbf{Calabi, Eugenio.}
Extremal K\"ahler metrics.
\emph{Seminar on differential geometry} (S.-T.~Yau, editor),
Princeton University Press (1982) 259--290.
 
\bibitem[Cao96]{Cao96}
\textbf{Cao, Huai-Dong.}
Existence of gradient K\"ahler-Ricci solitons,
\emph{Elliptic and parabolic methods in geometry}
(B. Chow, R. Gulliver, S. Levy, J. Sullivan, editors), 
A\,K Peters (1996) 1--16.

\bibitem[Cao97]{Cao97}
\textbf{Cao, Huai-Dong.}
Limits of solutions to the K\"ahler-Ricci flow,
\emph{J.~Differential Geom.}~\textbf{45} (1997) 257--272.

\bibitem[CHI04]{CHI04}
\textbf{Cao, Huai-Dong; Hamilton, Richard S.; Ilmanen, Tom.}
Gaussian densities and stability for some Ricci solitons.
\texttt{arXiv:math/0404165v1}.

\bibitem [CM12]{CM12}
    \textbf{Colding, Tobias Holck; Minicozzi, William P., II.}
    Generic mean curvature flow~I; generic singularities.
    \emph{Ann.~of Math.}~\textbf{175} (2012), 755--833.
        
\bibitem [CM15]{CM15}
    \textbf{Colding, Tobias Holck; Minicozzi, William P., II.}
    Uniqueness of blowups and \L ojaciewicz inequalities.
    \emph{Ann.~of Math.~(2)} \textbf{182} (2015), no.~1, 221--285.

\bibitem[DHS12]{DHS}
\textbf{Daskalopoulos, Panagiota; Hamilton, Richard; \v Se\v sum, Nata\v sa.}
Classification of ancient compact solutions to the Ricci flow on surfaces.
\emph{J.~Differential Geom.}~\textbf{91} (2012), no.~2, 171--214.

\bibitem[DS06]{DS}
\textbf{Daskalopoulos, Panagiota; \v Se\v sum, Nata\v sa.}
Eternal solutions to the Ricci flow on $\mb R^2$.
\emph{Int.~Math.~Res.~Not.}~(2006) Art.~ID 83610.

\bibitem[EMT11]{EMT}
\textbf{Enders, Joerg; M\"uller, Reto; Topping, Peter M.}
On type-I singularities in Ricci flow.
\emph{Comm.~Anal.~Geom.}~\textbf{19} (2011), no.~5, 905--922.

\bibitem[FIK03]{FIK03}
\textbf{Feldman, Mikhail; Ilmanen, Tom; Knopf, Dan.}
Rotationally symmetric shrinking and expanding gradient K\"ahler--Ricci solitons.
\emph{J.~Differential Geom.}~\textbf{65} (2003), no.~2, 169--209.

\bibitem[GS17]{GS17}
\textbf{Guo, Bin; Song, Jian.}
On Feldman--Ilmanen--Knopf conjecture for the blow-up behavior of the K\"ahler--Ricci flow.
\emph{Math.~Res.~Lett.}~In press. \texttt{(arXiv:1505.04869)}

\bibitem[HHS14]{HHS14}
\textbf{Hall, Stuart; Haslhofer, Robert; Siepmann, Michael.}
	The stability inequality for Ricci-flat cones.
	\emph{J.~Geom.~Anal.} \textbf{24} (2014), no.~1, 472--494. 
	
\bibitem[HM11]{HM11}
\textbf{Hall, Stuart J.; Murphy, Thomas.}
On the linear stability of K\"ahler--Ricci solitons.
\emph{Proc.~Amer.~Math.~Soc.}~\textbf{139} (2011), no.~9, 3327--3337.

\bibitem[Ham93]{Ha1}
\textbf{Hamilton, Richard S.}
The formation of singularities in the Ricci flow. \emph{Surveys in differential geometry, Vol.~II}
(Cambridge, MA, 1993), 7--136, Internat.~Press, Cambridge, MA, 1995

\bibitem[HN15]{HN15}
	\textbf{Haslhofer, Robert; Naber, Aaron.}
	Weak solutions for the Ricci flow I.
	\texttt{arXiv:1504.\allowbreak00911}.

\bibitem[IKS16]{IKS14}
\textbf{Isenberg, James; Knopf, Dan; \v Se\v sum, Nata\v sa.}
Ricci flow neckpinches without rotational symmetry.
 \emph{Comm.~Partial Differential Equations} \textbf{41} (2016), no.~12, 1860--1894.
 \texttt{DOI:10.1080/03605302.2016.1233982}.
 
\bibitem[Ive93]{Ivey93}\textbf{Ivey, Thomas.} Ricci solitons on compact
three-manifolds. \emph{Differential Geom.~Appl.~}\textbf{3} (1993), no.~4, 301--307.

\bibitem[KS17]{KS17}
\textbf{Knopf, Dan; \v Se\v sum, Nata\v sa.}
Dynamic instability of $\mb{CP}^N$ under Ricci flow.
 \emph{J.~Geom.~Anal.} In press.
DOI: 10.1007/s12220-018-0022-6
 \texttt{(arXiv:1709.\allowbreak01005)}

\bibitem[Koi90]{Koiso90}
\textbf{Koiso, Norihito.}
On rotationally symmetric Hamilton's equation for K\"ahler-Einstein metrics,
\emph{Recent Topics in Differential and Analytic Geometry} (T. Ochiai, editor),
Advanced Studies in Pure Math \textbf{18}-1, 
Kinokuniya and Academic Press (1990) 327--337.

\bibitem[Kot17]{Kot17}
\textbf{Kotschwar, Brett.}
K\"ahlerity of shrinking gradient Ricci solitons asymptotic to K\"ahler cones.
\texttt{arXiv:1701.08486v1}.

\bibitem[Kro13]{Kro13}
\textbf{Kr\"oncke, Klaus.}
Stability of Einstein metrics under Ricci flow.
\emph{Comm.~Anal. Geom.}~In press.
(\texttt{arXiv:\allowbreak1312.2224v2}).

\bibitem[Max14]{Maximo14}
\textbf{M\'aximo, Davi.}
On the blow-up of four-dimensional Ricci flow singularities.
\emph{J.~Reine Angew.~Math.}~\textbf{692} (2014), 153--171.

\bibitem[Per02]{Pe}
\textbf{Perelman, Grisha.}
The entropy formula for the Ricci flow and its geometric applications.
\texttt{arXiv:math.DG/0211159}.

\bibitem[Ses05]{Ses05}
\textbf{\v{S}e\v{s}um, Nata\v{s}a.}
Curvature tensor under the Ricci flow.
\emph{Amer.~J.~Math.}~\textbf{127} (2005), no.~6, 1315--1324.

\bibitem[SW11]{SW11}
\textbf{Song, Jian; Weinkove, Ben.}
The K\"ahler-Ricci flow on Hirzebruch surfaces.
\emph{J.~Reine Angew.~Math.}~\textbf{659} (2011), 141--168.

\bibitem[ST11]{ST11}
  \textbf{Streets, Jeffrey; Tian, Gang.}
  Hermitian curvature flow.
  \emph{J.~Eur.~Math.~Soc.}~(JEMS) \textbf{13} (2011), no.~3, 601--634.

\bibitem[Stu16]{Stu16}
 	\textbf{Sturm, Karl-Theodor.}
	Super-Ricci flows for metric measure spaces. I.
	\texttt{arXiv:1603.\allowbreak02193v1}.

\end{thebibliography}
\end{document}